\newtheorem{theorem}{Theorem}[]
\newtheorem{proposition}{Proposition}[section]
\newtheorem{corollary}[proposition]{Corollary}
\newtheorem{lemma}[proposition]{Lemma}
\theoremstyle{definition}
\newtheorem{remark}[proposition]{Remark}
\newtheorem*{acknowledgements}{Acknowledgements}
\newcommand{\R}{\mathbb{R}} 
\newcommand{\C}{\mathbb{C}} 
\newcommand{\dd}{\mathrm{d}} 
\newcommand{\y}{\mathbf{y}} 
\newcommand{\x}{\mathbf{x}} 
\newcommand{\T}{\mathbf{t}} 
\newcommand{\norm}[3]{\left\|#1\right\|^{#2}_{#3}} 
\newcommand{\Duality}[2]{\left\langle #1 \Big| #2 \right\rangle} 
\newcommand{\inner}[2]{\left\langle #1, #2 \right\rangle} 
\newcommand{\mR}{\mathbb{R}} 
\newcommand{\normi}[1]{\lVert #1 \rVert}
\newcommand{\abs}[1]{\lvert #1 \rvert}
\DeclareMathOperator{\supp}{supp}
\DeclareMathOperator{\dist}{dist}
\DeclareMathOperator{\inte}{int}
\DeclareMathOperator{\spec}{spec}
\author[]{Pedro Caro \and Mikko Salo}
\title[]{Stability of the Calder\'on problem in admissible geometries}
\date{}
\keywords{Inverse boundary problems; Calder\'on problem; stability.}
\address{Department of Mathematics and Statistics, Helsingin yliopisto / Helsingfors universitet / University of Helsinki, Finland}
\email{pedro.caro@helsinki.fi}
\address{Department of Mathematics and Statistics, University of Jyv\"askyl\"a, Finland}
\email{mikko.j.salo@jyu.fi}
\begin{document}

\begin{abstract}
In this paper we prove \textit{log log} type stability estimates for inverse boundary value problems on admissible Riemannian manifolds of dimension $n \geq 3$. The stability estimates correspond to the uniqueness results in \cite{DSFKSaU}. These inverse problems arise naturally when studying the anisotropic Calder\'on problem.
\end{abstract}

\maketitle


\section{Introduction}

\noindent {\bf Background.} In the inverse conductivity problem of Calder\'on \cite{C}, the objective is to determine the electrical properties of a medium from voltage and current measurements on its boundary. Suppose that the medium is modelled by a bounded open set $\Omega \subset \mR^n$ with Lipschitz boundary, and let $\gamma = (\gamma^{jk}) \in L^{\infty}(\Omega, \mR^{n \times n})$ be a positive definite symmetric matrix function describing the electrical conductivity. Then for any boundary voltage $f$, the voltage potential $u$ in the medium satisfies the conductivity equation, 
$$
\mathrm{div}(\gamma \nabla u) = 0 \text{ in } \Omega, \quad u|_{\partial \Omega} = f.
$$
The boundary measurements are encoded by the Dirichlet-to-Neumann map (DN map for short) 
$$
\Lambda_{\gamma}: f \mapsto \gamma \nabla u \cdot \nu|_{\partial \Omega}
$$
where $\nu$ is the unit outer normal of $\partial \Omega$. Using a suitable weak definition, the DN map becomes a bounded linear operator 
$$
\Lambda_{\gamma}: H^{1/2}(\partial \Omega) \to H^{-1/2}(\partial \Omega)
$$
where $H^s(\partial \Omega)$ is the $L^2$ based Sobolev space on $\partial \Omega$. The inverse problem is to determine properties of the unknown conductivity function $\gamma$ from the knowledge of the map $\Lambda_{\gamma}$.

Assume now that the conductivity is \emph{isotropic}, that is, 
$$
\gamma^{jk}(x) = \gamma(x) \delta^{jk}
$$
where $\gamma \in L^{\infty}(\Omega)$ is a positive function. One can ask the following basic questions for the Calder\'on problem with isotropic conductivities:
\begin{enumerate}
\item[1.] 
{\bf Uniqueness:} does $\Lambda_{\gamma_1} = \Lambda_{\gamma_2}$ imply $\gamma_1 = \gamma_2$?
\item[2.] 
{\bf Reconstruction:} find an algorithm for computing $\gamma$ from $\Lambda_{\gamma}$.
\item[3.] 
{\bf Stability:} if $\Lambda_{\gamma_1}$ and $\Lambda_{\gamma_2}$ are close, are also $\gamma_1$ and $\gamma_2$ close?
\end{enumerate}
Both the theoretical and applied aspects of the Calder\'on problem have been under intense study, and we refer to the survey \cite{U_IP} for more information. In particular, there are several uniqueness results \cite{AP}, \cite{HT}, \cite{N_2D}, \cite{SU} and reconstruction procedures \cite{regulDbar}, \cite{N_reconstruction}. In this paper we are interested in stability results, and we proceed to describe these in more detail.

The fundamental stability result due to Alessandrini \cite{A} states that if the coefficients $\gamma_1$ and $\gamma_2$ satisfy a priori bounds in $H^{s+2}(\Omega)$ for $s > n/2$ where $n \geq 3$, then 
$$
\normi{\gamma_1-\gamma_2}_{L^{\infty}(\Omega)} \leq \omega(\normi{\Lambda_{\gamma_1}-\Lambda_{\gamma_2}}_{H^{1/2} \to H^{-1/2}})
$$
where $\omega$ is a modulus of continuity satisfying 
\begin{equation*}
\omega(t) \leq C \abs{\log\,t}^{-\sigma}, \quad 0 < t < 1/e
\end{equation*}
with $C$ depending on the a priori bounds.
This \emph{log} type stability for the Calder\'on problem (as opposed to H\"older or Lipschitz stability) and the required a priori bounds express the fact that this inverse problem is highly ill-posed. It has been shown that logarithmic stability is optimal for the Calder\'on problem \cite{Mandache}, although if one has a priori information then one may have better stability properties \cite{AlessandriniVessella}. There are several related stability results in the literature as \cite{CaGR} and \cite{CFR}. We refer to the survey \cite{AlessandriniSurvey} for further references. We also mention that in practice, the measured DN map in presence of noise may not coincide with a DN map for any conductivity, and to rectify this the stability analysis has been combined with a regularization procedure in \cite{regulDbar} for $n=2$.

\vspace{8pt}

\noindent {\bf Anisotropic Calder\'on problem.} In this paper we study stability for the Calder\'on problem with \emph{anisotropic} conductivities, where $\gamma(x)$ is a matrix function which may not be a scalar multiple of the identity matrix. It is well known that the anisotropic Calder\'on problem has a simple obstruction to uniqueness: given any anisotropic conductivity $\gamma$ defined in $\Omega$ with smooth boundary and any diffeomorphism $F : \overline{\Omega} \longrightarrow \overline{\Omega}$ satisfying $F|_{\partial \Omega} = \mathrm{Id}$, one has 
$$
\Lambda_\gamma = \Lambda_{F_\ast \gamma}.
$$ Here $F_\ast \gamma$ is the pushforward conductivity 
\[F_\ast \gamma (x) = \left. \frac{DF \, \gamma \, DF^t}{\mathrm{det}\,DF} \right|_{F^{-1}(x)}, \]
where $DF $ denotes the matrix given by $ ( \partial_{x_j} F^k )$ and $DF^t$ is its transpose. It is known that when $n=2$, the DN map $\Lambda_{\gamma}$ determines $\gamma$ up to such a diffeomorphism \cite{N_2D}, \cite{ALP}, but for $n \geq 3$ this is only known for real-analytic conductivity matrices \cite{LU}. A simplification of the anisotropic Calder\'on problem which avoids this obstruction consists in assuming that $\gamma^{jk} = \sigma \gamma_0^ {jk}$ with the matrix $(\gamma_0^ {jk})$ being known and trying to recover  the scalar function $\sigma$ from $\Lambda_\gamma$. Note that if $(\gamma_0^ {jk})$ is the identity matrix, this is just the Calder\'on problem for isotropic conductivities.

As was pointed out in \cite{LU}, whenever the conductivity is smooth and $n \geq 3$ the anisotropic Calder\'on problem is of geometrical nature and it can be formulated in Riemannian manifolds as follows. Let $(M, g)$ be an oriented compact Riemannian $n$-dimensional manifold with boundary $\partial M$ and $n = \dim(M) \geq 3$. The Laplace-Beltrami operator associated to the metric $g = (g_{jk})$ and applied to a smooth function $u$ can be written in local coordinates as
\[\Delta_g u = |g|^{-1/2} \partial_{x_j} (|g|^{1/2} g^{jk} \partial_{x_k} u)\]
where $(g^{jk})$ is the inverse matrix of $(g_{jk})$ and $|g|$ is the determinant of $(g_{jk})$. Here we are using Einstein's summation convention: repeated indices in upper and lower position are summed. Consider $u \in H^1(M)$ solving $ -\Delta_gu = 0$ in $M^\mathrm{int}$ such that $u|_{\partial M} = f$ and define the DN map $ \Lambda_g : H^{1/2} (\partial M) \longrightarrow H^{-1/2} (\partial M)$ by
\begin{equation*}
\Duality{ \Lambda_g f }{ \phi|_{\partial M} } = \int_M \langle d u, d \phi \rangle_g \, \dd V_g
\end{equation*}
for any $f \in H^{1/2}(\partial M)$ and any $ \phi \in H^1(M) $. Here  $ \langle \centerdot \vert \centerdot \rangle $ denotes the duality between $ H^{1/2} (\partial M) $ and $ H^{-1/2} (\partial M) $. If $ f $ is smooth enough one can check that $ \Lambda_g f = g(\nu, \nabla u)|_{\partial M} = du(\nu)|_{\partial M} = \nu(u)|_{\partial M} $ where $ \nu $ represents the unit outer normal to $ \partial M $. Now, the Calder\'on problem on manifolds consists in recovering $g$ up to a boundary fixing diffeomorphism from $\Lambda_g$. Once again, it makes sense to consider the simplification where $g$ belongs to a fixed conformal class defined by some metric $g'$ and one tries to recover the unknown conformal factor from $\Lambda_g$. Also here one can consider different aspects such as uniqueness, reconstruction and stability. Here, we will study the question of stability in the conformal class defined by an \textit{admissible} metric $g'$.

\vspace{8pt}

\noindent{\bf Inverse problem for Schr\"odinger equation.} It turns out that the Calder\'on problem in a fixed conformal class can be reduced to the inverse boundary value problem (IBVP) of determining the electric potential of a Schr\"odinger operator on a compact Riemannian manifold from boundary measurements of all its solutions. In order to set up this problem, we consider  an oriented compact Riemannian $n$-dimensional manifold $(M, g)$, with boundary $\partial M$ and dimension $n \geq 2$, and an electric potential $q \in L^\infty(M)$. We define the Cauchy data set of $H^1$ solutions to the Schr\"odinger operator $ -\Delta_g + q$ as the set, denoted by $C_q$, of pairs $(f,w) \in H^{1/2} (\partial M) \times H^{-1/2} (\partial M)$ for which there exists 
$u \in H^1(M)$ solving $(-\Delta_g + q)u = 0$ in $M^\mathrm{int}$ such that $u|_{\partial M} = f$ and
\begin{equation}
\Duality{ w }{ \phi|_{\partial M} } = \int_M (\langle d u, d \phi \rangle_g + q u \phi) \, \dd V_g \label{id:normalDERIVATIVE}
\end{equation}
for any $ \phi \in H^1(M) $. Here  $ \langle \centerdot \vert \centerdot \rangle $ denotes the duality between $ H^{1/2} (\partial M) $ and $ H^{-1/2} (\partial M) $. For other notations used here and throughout the text see the paragraph Notation at the end of this section. Again, if $ f $ is smooth enough one can check that $ w = g(\nu, \nabla u)|_{\partial M} = du(\nu)|_{\partial M} = \nu(u)|_{\partial M} $ where $ \nu $ represents the unit outer normal to $ \partial M $. Thus, the IBVP under consideration consists in determining the electric potential $q$ from the Cauchy data set $C_q$. Associated to this problem there are several relevant questions, namely, uniqueness, reconstruction and stability. In this paper, we will consider the question of stability in the case where $g$ is in the conformal class of an \textit{admissible} metric $g'$ (that is $g = cg'$ with $c$ denoting the conformal factor) and $n \geq 3$. 

In order to establish the relation between the IBVP for Schr\"odinger operator and the anisotropic Calder\'on problem, it is enough to note that $u \in H^1(M)$ is solution of $-\Delta_g u = 0$ in $M$ with $g = cg'$ if and only if $v = c^{\frac{n - 2}{4}} u \in H^1(M)$ is a solution of the Schr\"odinger equation $-\Delta_{g'}v + qv = 0$ with $q = c^{-\frac{n - 2}{4}} \Delta_{g'} c^{\frac{n - 2}{4}} $. Thus, knowing the conformal factor $c$ on $\partial M$ we can relate $\Lambda_g$ with $C_q$ for the matrix $g'$. This sort of relation will be used for studying the questions already mentioned (see Section \ref{sec:boundaryINTERIOR} below).

\vspace{8pt}

\noindent{\bf Main results.} We next describe the main results in this paper. Let $ (M_0, g_0) $ be a simple\footnote{A compact manifold $(M, g)$ with boundary is called simple if, for any point $p \in M$, the exponential map $\exp_p$ is a diffeomorphism from its maximal domain in $ T_p M $ onto $M$ and the boundary $\partial M$ is strictly convex.} Riemannian oriented smooth compact $ (n - 1) $-dimensional manifold (for $ n \geq 3 $) with boundary $ \partial M_0 $. Assume $ (M, g) $ to be a Riemannian oriented smooth compact manifold with boundary such that there exist a smooth $ n $-dimensional embedded sub-manifold of $ \R \times M_0^\mathrm{int} $, namely $ M' $, an orientation preserving diffeomorphism $ F : M \longrightarrow M' $ --whose inverse will be denoted by $ G $-- and a positive smooth function $ c : M \longrightarrow (0 , + \infty) $ satisfying
\[ g = c F^\ast g', \]	
where $ g' = (e_\R \oplus g_0)|_\Omega $ and $ e_\R $ stands for the euclidean metric in $ \R $. A manifold $(M,g)$ as above will be called, throughout the paper, \emph{admissible}.

We now state the stability estimates  for the IBVP of recovering the electric potential $q$ from the Cauchy data set $C_q$. First we introduce the notion of proximity for Cauchy data sets that will be used to state the stability estimates. Let $q_1$ and $q_2$ belong to $L^\infty(M)$ and consider the Cauchy data sets $C_{q_1}$ and $C_{q_2}$ as above. Define the pseudo-metric distance
\[\dist (C_{q_1}, C_{q_2}) = \max_{j, k \in \{ 1, 2 \}} \sup_{\substack{(f_j, w_j) \in C_{q_j} \\ \norm{f_j}{}{H^{1/2}(\partial M)} = 1}} I ((f_j, w_j); C_{q_k})\]
where
\[ I ((f_j, w_j); C_{q_k}) = \inf_{(f_k, w_k) \in C_{q_k}} \left[ \norm{f_j - f_k}{}{H^{1/2}(\partial M)} + \norm{w_j - w_k}{}{H^{-1/2}(\partial M)} \right]. \]

\begin{theorem}\label{th:sch} \sl Consider a constant $K \geq 1$ and let $(M, g)$ be admissible. There exists a constant $C \geq 1$ depending on $M$ and $g$ such that
\begin{align*}
\norm{q_1 - q_2}{}{L^2(\mathbb{R}; H^{-3} (M_0))} \lesssim & \bigg| \log \big(\dist (C_{q_1}, C_{q_2}) \\
& + |\log \dist (C_{q_1}, C_{q_2})|^{-1} \big) \bigg|^{-\lambda/4}
\end{align*}
whenever $q_1, q_2 \in L^\infty (M) \cap H^\lambda (M)$ with $\lambda \in (0, 1/2)$ satisfy $\norm{q_j}{}{L^\infty(M)} + \norm{q_j}{}{H^\lambda (M)} \leq K$ and $\dist (C_{q_1}, C_{q_2}) \leq e^{-CK}$. Here the implicit constants only depend on $M, g, n, K$ and $\lambda$.
\end{theorem}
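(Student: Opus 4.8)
The plan is to make quantitative every step of the uniqueness argument of \cite{DSFKSaU} and then to optimise a hierarchy of parameters. After the conformal change $g\mapsto g'=e_{\mathbb{R}}\oplus g_0$ and the diffeomorphism $F$ (which are fixed and smooth, hence affect only constants), it suffices to prove the estimate on the compact submanifold $M'\Subset\mathbb{R}\times M_0^{\mathrm{int}}$: the potentials get multiplied by a fixed positive smooth function and the norms in the statement change by bounded factors. We extend $q_1-q_2$ by zero to all of $\mathbb{R}\times M_0$; since $\lambda<1/2$ this preserves the bound $\|q_1-q_2\|_{H^\lambda}\lesssim K$, and now $\widehat{q_1-q_2}(\xi,\cdot)$ (Fourier transform in $x_1$) is, for each $\xi\in\mathbb{R}$, a function in $H^\lambda(M_0)$ supported in the interior. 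The starting point is the integral identity: if $u_j\in H^1$ solves $(-\Delta_{g'}+q_j)u_j=0$ with Cauchy data $(f_j,w_j)\in C_{q_j}$, then $\int (q_1-q_2)u_1u_2\,\dd V=\langle w_1|f_2\rangle-\langle w_2|f_1\rangle$; replacing the Cauchy data of one solution by a nearby element of the other Cauchy data set (and using that the identity vanishes when both data come from $C_{q_2}$) gives
\[
\Big|\int (q_1-q_2)u_1u_2\,\dd V\Big|\lesssim \dist(C_{q_1},C_{q_2})\,\|u_1\|_{H^1}\,\|u_2\|_{H^1}.
\]

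Next I would insert complex geometrical optics solutions. Using the limiting Carleman weight $x_1$ and the simplicity of $(M_0,g_0)$, pick a transversal phase $\varphi$ on $M_0$ with $|d\varphi|_{g_0}=1$ (a distance function centred at an exterior point), solve the transport equation $2\langle d\varphi,dv\rangle_{g_0}+(\Delta_{g_0}\varphi)v=0$, and set
\[
u_1=e^{-\tau(x_1+i\varphi)}(v_1+r_1),\qquad u_2=e^{(\tau+2i\lambda)(x_1+i\varphi)}(v_2+r_2),\qquad \tau\to\infty,
\]
where a Carleman estimate for $-\Delta_{g'}+q_j$ (valid once $\tau\gtrsim K+1$, which is where the hypothesis $\dist(C_{q_1},C_{q_2})\le e^{-CK}$ enters) yields $\|r_j\|_{L^2}\lesssim \tau^{-1}$, while $\|u_j\|_{H^1}\lesssim e^{C\tau}$. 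In the product the exponentials combine to $u_1u_2=e^{2i\lambda x_1}e^{-2\lambda\varphi}(v_1v_2+O_{L^2}(\tau^{-1}))$, and passing to geodesic polar coordinates centred at the point defining $\varphi$ one recognises the main part of $\int(q_1-q_2)u_1u_2\,\dd V$, as $\tau\to\infty$, as the attenuated geodesic ray transform $I_{2\lambda}$ (with constant attenuation $2\lambda$) of $\widehat{q_1-q_2}(-2\lambda,\cdot)$ over geodesics of $M_0$. Combining with the integral identity and optimising $\tau\sim|\log\dist(C_{q_1},C_{q_2})|$ produces
\[
\big|I_{2\lambda}\big(\widehat{q_1-q_2}(-2\lambda,\cdot)\big)(\gamma)\big|\lesssim e^{C|\lambda|}\,\mu,\qquad \mu:=\dist(C_{q_1},C_{q_2})+|\log\dist(C_{q_1},C_{q_2})|^{-1},
\]
for all unit speed geodesics $\gamma$ in $M_0$ and all $|\lambda|\lesssim|\log\dist(C_{q_1},C_{q_2})|$, the factor $e^{C|\lambda|}$ coming from the attenuating weight $e^{-2\lambda\varphi}$.

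I would then invert the transform. On a simple manifold $I_{2\lambda}$ is stably invertible in Sobolev spaces, but its stability constant grows exponentially in the attenuation; combined with an interpolation turning the pointwise-in-$\gamma$ bound above into a Sobolev bound (using the a priori bound $\|q_j\|_{H^\lambda}\le K$) this gives $\|\widehat{q_1-q_2}(-2\lambda,\cdot)\|_{H^{-3}(M_0)}\lesssim e^{C|\lambda|}\mu$. Finally, by Plancherel in $x_1$,
\[
\|q_1-q_2\|_{L^2(\mathbb{R};H^{-3}(M_0))}^2=\int_{\mathbb{R}}\|\widehat{q_1-q_2}(\xi,\cdot)\|_{H^{-3}(M_0)}^2\,\dd\xi,
\]
and we split the integral at $|\xi|=R$: the low frequencies are controlled by $e^{CR}\mu$ (from the previous bound, with $\xi=-2\lambda$), the high frequencies by $R^{-\lambda}K$ (from $\|q_1-q_2\|_{H^\lambda}\lesssim K$, which bounds $\int(1+\xi^2)^\lambda\|\widehat{q_1-q_2}(\xi,\cdot)\|_{L^2(M_0)}^2\,\dd\xi$). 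Choosing $R\sim|\log\mu|$ to balance the two contributions, and interpolating once more against the a priori bound to reach the order $-3$, yields the stated estimate; the precise exponent $\lambda/4$ records the losses accumulated in the Carleman remainder, the ray transform inversion, and these interpolations.

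The main obstacle is the quantitative limiting argument of the second and third paragraphs: one must control the complex geometrical optics solutions and the passage to the attenuated ray transform with \emph{simultaneous} explicit dependence on the Carleman parameter $\tau$ and the frequency $\lambda$, because both enter exponentially --- $e^{C\tau}$ through the ill-posedness of the Carleman method, $e^{C|\lambda|}$ through the attenuated ray transform inversion --- and it is precisely the fact that the frequency one wishes to recover must be taken $\lesssim\tau$ while $\tau$ itself can only be taken $\sim|\log\dist(C_{q_1},C_{q_2})|$ that degrades the single logarithm of the Euclidean Calder\'on problem to the double logarithm in the statement. The remaining ingredients --- the Carleman estimate on admissible manifolds, the CGO construction, and the stability of the attenuated geodesic ray transform on simple manifolds --- enter as (essentially known) black boxes.
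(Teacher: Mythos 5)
Your skeleton — integral identity, CGOs with the Carleman weight $x_1$ and a distance function as transversal phase, reduction to an attenuated geodesic ray transform of $\widehat{q_1-q_2}(\sigma,\cdot)$ with attenuation matching the frequency, then a high/low frequency splitting against the a priori $H^\lambda$ bound — matches the paper's. But there is a genuine gap at the pivotal step: you invoke, as a black box, stable invertibility of the attenuated ray transform $I_{2\lambda}$ on a simple manifold for \emph{arbitrary} attenuation, with a constant growing at most exponentially in $|\lambda|$. This is not available. For simple manifolds $M_0$ of dimension $n-1\geq 2$, injectivity of $I_\sigma$ is only known for $|\sigma|\leq\delta_0$ small (by perturbation from the unattenuated case, as in \cite{DSFKSaU}); the result for all attenuations is known only on simple \emph{surfaces} \cite{SaU}, i.e.\ only when $n=3$, and even there no quantitative stability constant with controlled growth in $\sigma$ is on record. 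Without this, your bound $\lVert\widehat{q_1-q_2}(-2\lambda,\cdot)\rVert_{H^{-3}(M_0)}\lesssim e^{C|\lambda|}\mu$ for all $|\lambda|\lesssim|\log\dist(C_{q_1},C_{q_2})|$ is unjustified, and the frequency-splitting step has nothing to split.

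The paper's proof supplies exactly the missing ingredient. First, only the range $|\sigma|\leq\delta_0$ is controlled: one tests against $b=\mu\, I_\sigma f$, integrates over base points $\omega\in\partial M_0$, and passes to the normal operator $N_\sigma=I_\sigma^*I_\sigma$, which is an elliptic pseudodifferential operator of order $-1$; ellipticity plus injectivity for small $\sigma$ plus a compactness argument (Stefanov--Uhlmann) give the quantitative estimate $\lVert\hat q(\sigma,\cdot)\rVert_{H^{-3}}\lesssim\varepsilon^{1/2}+|\log\varepsilon|^{-1}$ for $|\sigma|\leq\delta_0$ only. Second — and this is the idea your proposal lacks — since $q$ is compactly supported in the Euclidean direction, $\sigma\mapsto\langle\hat q(\sigma,\cdot),f\rangle$ extends holomorphically with exponential-type bounds, and a subharmonic-function (Poisson kernel) estimate in the upper half-plane propagates the smallness from the segment $\{|\sigma|\leq\delta_0,\ \eta=0\}$ to $\{|\sigma|\leq\tilde R,\ \eta=1\}$ at the cost of a factor $e^{\tilde k\log(\varepsilon^{1/2}+|\log\varepsilon|^{-1})/\tilde R^{2}}$. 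It is this analytic continuation in the frequency/attenuation variable, not an inversion of $I_\sigma$ for large $\sigma$, that makes the low-frequency set large enough for the splitting argument, and it is also the source of the second logarithm. You should replace your black box by this two-step mechanism (small-attenuation normal operator estimate plus analytic continuation in $\sigma$); the rest of your outline then goes through essentially as you describe.
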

Note that we are making an abuse of notation writing $q_j$ instead of the extension by zero of $G^\ast q_j$ out of $M'$.
\begin{remark} \sl
Assuming a priori bounds for stronger norms of $q_j$, we can replace the norm on the left hand side of the stability estimate by stronger norms only losing some power on the right hand side. This can be achieved using appropriate interpolation arguments (see for example \cite{A}).
\end{remark}

We next state the stability estimates for the Calder\'on problem in a fixed conformal class of an admissible metric. First recall the operator norm that we will use to quantify the proximity between the Dirchlet-to-Neumann maps:
\[ \| \Lambda_g \|_\ast = \sup_{f \in H^{1/2}(\partial M) \setminus \{ 0 \}} \frac{\| \Lambda_g f \|_{H^{-1/2}(\partial M)}}{\| f \|_{H^{1/2}(\partial M)}}. \]

\begin{theorem}\label{th:cal} \sl Consider a constant $K \geq 1$ and an admissible manifold $(M,g)$«. Let $g_1$ and $g_2$ be two metrics on $M$ satisfying $g_j = c_j F^\ast g'$ with $F^\ast$ and $g'$ as above. If $c_1$ and $c_2$ are smooth and $\norm{c_j^{-1}}{}{L^\infty (M)} + \norm{c_j}{}{C^3 (M)} \leq K$, there exists a constant $C \geq 1$ depending on $M'$, $g'$ and $n$ such that
\begin{equation*}
\norm{c_1 - c_2}{}{L^\infty (M)} \lesssim \bigg| \log \big( \norm{\Lambda_{g_1} - \Lambda_{g_2}}{}{\ast} + |\log \norm{\Lambda_{g_1} - \Lambda_{g_2}}{}{\ast}|^{-1} \big) \bigg|^\theta
\end{equation*}
whenever $\norm{\Lambda_{g_1} - \Lambda_{g_2}}{}{\ast} \leq e^{-CK}$. Here $\theta$ is a small positive constant which depends on $n$. The implicit constants only depend on $M', g', F, n$ and $K$.

\end{theorem}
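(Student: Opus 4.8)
\emph{Plan of proof.} Write $E:=\|\Lambda_{g_1}-\Lambda_{g_2}\|_{\ast}$. The plan is to reduce Theorem~\ref{th:cal} to the Schr\"odinger estimate of Theorem~\ref{th:sch} via the conformal substitution recalled in the introduction, and then to turn the resulting weak-norm control of the potentials into $L^\infty$ control of the conformal factors by an elliptic regularity argument followed by interpolation. Concretely, set $m_j:=c_j^{(n-2)/4}$ and $q_j:=m_j^{-1}\Delta_{g'}m_j$, so that $-\Delta_{g_j}u=0$ in $M^{\mathrm{int}}$ if and only if $m_j u$ solves $(-\Delta_{g'}+q_j)(m_j u)=0$; after transporting everything to $M'$ by $G=F^{-1}$ this is exactly the setting of the Cauchy data sets $C_{q_j}$ for the metric $g'$. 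The hypothesis $\|c_j^{-1}\|_{L^\infty(M)}+\|c_j\|_{C^3(M)}\le K$ gives $c_j\in[1/K,K]$ and forces an a priori bound $\|q_j\|_{L^\infty(M)}+\|q_j\|_{H^\lambda(M)}\le K'$, for any fixed $\lambda\in(0,1/2)$, with $K'$ depending only on $K,M',g',n$; the restriction $\lambda<1/2$ is precisely what keeps the zero-extension of $G^\ast q_j$ off $M'$ in $H^\lambda$, so that Theorem~\ref{th:sch} is applicable to the pair $q_1,q_2$.

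The first genuine step is a \emph{quantitative boundary determination}: from $E$ one must control the boundary traces $(m_1-m_2)|_{\partial M}$ and $\partial_\nu(m_1-m_2)|_{\partial M}$, say in $L^\infty(\partial M)$, by a power $E^{\mu}$ with $\mu=\mu(n)>0$. This is the content of Section~\ref{sec:boundaryINTERIOR}; it is obtained by testing the two DN maps against boundary data oscillating and concentrating at a boundary point, and it is where the $C^3$ regularity of the $c_j$ enters. Because the substitution $v=m_j u$ on $\partial M$ involves only $m_j$ and $\partial_\nu m_j$ there, this boundary information lets one compare the Cauchy data sets: given $(f_1,w_1)\in C_{q_1}$ with $\|f_1\|_{H^{1/2}(\partial M)}=1$, one undoes the substitution, applies $\Lambda_{g_2}$ to the resulting Dirichlet datum, and redoes the substitution using the approximately known boundary traces of $m_2$, producing $(f_2,w_2)\in C_{q_2}$ with $\|f_1-f_2\|_{H^{1/2}(\partial M)}+\|w_1-w_2\|_{H^{-1/2}(\partial M)}\lesssim E+E^{\mu}$, uniformly over normalized data; hence $\dist(C_{q_1},C_{q_2})\lesssim E^{\mu}$. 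For $E\le e^{-CK}$ the right-hand side is $\le e^{-CK}$, so the smallness hypothesis of Theorem~\ref{th:sch} holds, and since $|\log\dist(C_{q_1},C_{q_2})|$ is then comparable to $|\log E|$ one gets
\[
\|q_1-q_2\|_{L^2(\R;H^{-3}(M_0))}\lesssim\Big|\log\big(E+|\log E|^{-1}\big)\Big|^{-\lambda/4}.
\]

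It remains to pass from this bound to $\|c_1-c_2\|_{L^\infty(M)}$. The function $w:=m_1-m_2$ solves the elliptic equation $(\Delta_{g'}-q_1)w=(q_1-q_2)m_2$ in $M^{\mathrm{int}}$, with boundary data $w|_{\partial M}$ small by the boundary step. Since $m_2$ is bounded with $C^3$ control, multiplication by $m_2$ is bounded on the relevant Sobolev spaces, so that elliptic estimates for $\Delta_{g'}-q_1$ on $M'$ (exploiting the product structure $g'=e_\R\oplus g_0$), together with the comparison between the norm above and a Sobolev norm on $M$ via the fixed diffeomorphism $F$, bound a negative-order Sobolev norm of $w$ on $M$ by the displayed right-hand side plus the boundary error. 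Interpolating this weak bound against the a priori bound $\|w\|_{C^3(M)}\lesssim K$ makes $w$ small in $L^2(M)$, and then the elementary inequality $\|w\|_{L^\infty}\lesssim\|w\|_{L^2}^{2/(n+2)}\|\nabla w\|_{L^\infty}^{n/(n+2)}$, valid for Lipschitz $w$ on a bounded domain, combined with $\|\nabla w\|_{L^\infty}\lesssim K$, yields $\|w\|_{L^\infty(M)}$ small in all dimensions $n\ge 3$. Finally, since $s\mapsto s^{4/(n-2)}$ is smooth, hence Lipschitz, on the compact interval $[K^{-(n-2)/4},K^{(n-2)/4}]$ containing the ranges of $m_1$ and $m_2$, we obtain $\|c_1-c_2\|_{L^\infty(M)}\lesssim\|w\|_{L^\infty(M)}$; each interpolation costs a power, which accounts for the small exponent $\theta=\theta(n)$ in the statement.

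I expect the main obstacle to be the combination of the quantitative boundary determination with the bookkeeping in the second paragraph: for every normalized element of $C_{q_1}$ one must exhibit, fully explicitly, a nearby element of $C_{q_2}$, controlling simultaneously the error coming from $E$ and the error coming from the imperfect knowledge of the boundary traces of the $c_j$, and one must keep all dependence on $K$ explicit so that the single hypothesis $E\le e^{-CK}$ suffices for Theorem~\ref{th:sch} to apply. The concluding elliptic and interpolation step is comparatively routine, but it is precisely why $\theta$ has to be taken small.
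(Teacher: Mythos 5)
Your proposal is correct in outline but structurally different from the paper's argument, and the difference is worth recording. The paper does \emph{not} reduce Theorem~\ref{th:cal} to Theorem~\ref{th:sch} as a black box: instead it proves a separate integral identity tailored to the Calder\'on problem (Proposition~\ref{prop:int_forCAL}), in which the extra terms produced by the conformal substitution appear explicitly as boundary integrals involving $c_1-c_2$ and $\nu(c_1^{(n-2)/4})-\nu(c_2^{(n-2)/4})$; these are absorbed using the Kang--Yun boundary determination estimates (Corollary~\ref{cor:boundaryESc1c2}), and then the whole CGO/ray-transform/analytic-continuation machinery of Section~\ref{sec:stability} is run in parallel for the two problems with $q=c_1^{-\frac{n-2}{4}}\Delta_{g'}c_1^{\frac{n-2}{4}}-c_2^{-\frac{n-2}{4}}\Delta_{g'}c_2^{\frac{n-2}{4}}$ and $\varepsilon=\|\Lambda_{g_1}-\Lambda_{g_2}\|_*^{\lambda}$. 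You instead quantify the classical reduction at the level of Cauchy data sets, proving $\dist(C_{q_1},C_{q_2})\lesssim E^{\mu}$ and then invoking Theorem~\ref{th:sch}. Both routes consume exactly the same inputs (in particular the $C^1(\partial M)$ and normal-derivative boundary stability from \cite{KangYun}, which is estimate \eqref{es:stabilityBOUND} here), and your version has the virtue of modularity, at the cost of the bookkeeping you yourself identify: for each normalized $(f_1,w_1)\in C_{q_1}$ one must exhibit $(f_2,w_2)\in C_{q_2}$ with $f_2=(m_2/m_1)f_1$ and control $w_1-w_2$ through the relation between $\partial_{\nu'}v_j$ and $\Lambda_{g_j}$, which reintroduces precisely the boundary terms the paper handles in Proposition~\ref{prop:int_forCAL}; one must also check that the $K$-dependent constant in $\dist\lesssim E^{\mu}$ is absorbed by the hypothesis $E\le e^{-CK}$ so that the smallness condition of Theorem~\ref{th:sch} is met. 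For the endgame the paper rewrites $c_1^{\frac{n-2}{4}}c_2^{\frac{n-2}{4}}q$ as a divergence-form operator applied to $\log c_1^{\frac{n-2}{4}}-\log c_2^{\frac{n-2}{4}}$, so well-posedness of the Dirichlet problem is immediate, and it interpolates the source up to $H^{-1}$ \emph{before} applying the standard $H^1$ elliptic estimate; your variant $(\Delta_{g'}-q_1)(m_1-m_2)=(q_1-q_2)m_2$ is equivalent but requires you to justify that $0$ is not a Dirichlet eigenvalue of $-\Delta_{g'}+q_1$ (which follows from the existence of the positive solution $m_1$), and you should likewise interpolate $\|q_1-q_2\|_{L^2(\R;H^{-3})}$ against the a priori $L^2$ bound first rather than feed an $H^{-3}$ right-hand side into a boundary value problem. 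The concluding Gagliardo--Nirenberg step replacing the paper's Morrey-type interpolation is fine and only affects the (unspecified) exponent $\theta$.
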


In order to prove these theorems we will follow the standard argument based on complex geometrical optics solutions (CGOs for short). The first step is to use an integral identity that relates the unknowns in the interior with the boundary measurements. The second step is to extract information on the unknowns by using special solutions for the equation, namely, CGOs. In our case the information is described by a mixed Fourier transform/attenuated geodesic ray transform. More precisely, we are able to prove an estimate controlling a rather weak norm of the attenuated geodesic ray transform, with attenuation $\sigma$, of $\hat{q}(\sigma)$ --the Fourier transform of the unknown in the Euclidean direction at frequency $\sigma$. This estimate can be rephrased in terms of the normal operator for the ray transform, which is an elliptic operator of order $-1$. Thanks to the ellipticity of the normal operator, we manage to obtain control of $\hat{q}(\sigma)$ for a small set of low frequencies $\sigma$. By using analytic continuation, we enlarge the set of low frequencies and as a consequence we prove an inequality bounding a weak norm of the unknown. Finally, standard interpolation arguments yield the stability stated in Theorem \ref{th:sch} and Theorem \ref{th:cal}.

As we mentioned above, the sharp stability estimate of the isotropic Calder\'on problem is of \textit{log} type. Here we only prove \textit{log log} stability estimates. The extra \textit{log} in our results comes up because of the analytic continuation argument that enlarges the set of controlled frequencies. The small size of this set is due to the fact that we only apply injectivity of the attenuated geodesic ray transform for small attenuations. However, injectivity of the attenuated geodesic ray transform for larger attenuation would not imply \textit{log} stability following our approach. One can check that the implicit constant in Lemma \ref{lem:normalOPstability} (below) grows at least exponentially as $\delta_0$ increase. This together with the exponential factor in the estimate \eqref{es:stabNORMAL} would produce a second \textit{log} in the final stability estimate. Despite this second \textit{log} for the stability of the whole problem, we could gain better control from knowing the injectivity of the attenuated geodesic ray transform for larger attenuation, namely, we would be able to prove \textit{log} type stability for the low frequencies of the Fourier transform of the unknown in the Euclidean direction. This stability would become exponentially bad with the size of the low-frequency set. Injectivity of the attenuated ray transform on simple surfaces for any attenuation has been proven in \cite{SaU}. We mention that also in stability results for the Calder\'on problem with partial data, both \textit{log} estimates (\cite{Caro_stability}, \cite{HW2}) and \textit{log log} estimates (\cite{CDSFR}, \cite{CaDSFR}, \cite{HW}) appear.

The arguments we use to prove Theorem \ref{th:sch} and Theorem \ref{th:cal} are a quantification of the arguments in \cite{DSFKSaU} that prove uniqueness results for the above inverse problems. The approach in \cite{DSFKSaU} has been recently followed in \cite{CaDSFR} to prove \textit{log log} stability estimates for the Calder\'on problem with partial data. The quantification argument there is slightly different to ours. In \cite{CaDSFR}, the authors do not use explicitly the ellipticity of the normal operator, they prove a direct estimate for the attenuated geodesic ray transform.

The outline of this paper is as follows. In Section \ref{sec:boundaryINTERIOR} we provide the integral estimates that will be used later as starting points to prove the stability estimates given in the theorems stated above. In Section \ref{sec:CGOs} we review the construction of the CGOs given in \cite{DSFKSaU}. Finally, in Section \ref{sec:stability} we prove the stability estimates.

\vspace{8pt}
\noindent {\bf Notation. }\label{subsec:notations}
Throughout this paper:
\begin{itemize}
\item $ M^\mathrm{int} = M \setminus  \partial M $
\item $\Delta_g$, $\inner{\centerdot}{\centerdot}_g$ and $\dd V_g$ denote respectively the Laplace-Beltrami operator, the inner product for differential forms and the volume form associated to the Riemannian  metric $g$.
\item A Riemannian metric $g$ is denoted in local coordinates by the matrix $(g_{jk})$. Moreover, the inverse and the determinant of this matrix are denoted by $(g^{jk})$ and $|g|$.
\item If $F$ is a smooth map, $ F_\ast $ and $ F^\ast $ denote the push-forward and pull-back respectively.
\end{itemize}

\section{From the boundary to the interior} \label{sec:boundaryINTERIOR}
In this section we prove two integral identities, one for the IBVP for the Schr\"odinger operator and one for the generalized Calder\'on problem. These identities relate the unknowns in the interior with the corresponding boundary data. The notation is as in the introduction.

\begin{proposition} \label{prop:boundaryESq1q2} \sl Let $ q_1 $ and $ q_2 $ belong to $ L^\infty (M) $ and let $ C_{q_1} $ and $ C_{q_2} $ denote the Cauchy data sets for $H^1(M)$ solutions of the operators $-\Delta_g + q_1$ and $-\Delta_g + q_2$, with $g = c F^\ast g'$. Then for any $v_j \in H^1(M')$ with $j \in \{ 1, 2 \}$ solving the equation
\[-\Delta_{g'} v_j + ( c^{-\frac{n - 2}{4}} \Delta_{g'} c^\frac{n - 2}{4}  + c q_j) v_j = 0 \]
in $M'_\mathrm{int}$, we have
\begin{align*}
\bigg| \int_{M'} c &(q_1 - q_2) v_1 v_2 \, \dd V_{g'} \bigg| \\
& \lesssim \dist (C_{q_1}, C_{q_2}) Q \norm{v_1}{}{H^1(M')} \norm{v_2}{}{H^1(M')}
\end{align*}
where $ Q = \max\{ 1 + \norm{q_j}{}{L^\infty(M)}: j = 1, 2 \} $. Here we are making an abuse of notation which consists in writing $q_j$ and $c$ instead of $G^\ast q_j$ and $G^\ast c$. The implicit constant in the inequality depends on $n$, $c$, $M$, $M'$, $ g' $ and $F$.
\end{proposition}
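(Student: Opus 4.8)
The plan is to transport the problem to $(M', g')$ through the conformal change of variables, run the classical Alessandrini-type integral identity there, and only at the end replace the (possibly ill-defined) Dirichlet--to--Neumann maps by the Cauchy data sets. First I would transfer the given solutions: for $v_j \in H^1(M')$ solving the displayed equation, set $u_j := c^{-\frac{n-2}{4}}\, F^\ast v_j \in H^1(M)$, where $c$ is the original conformal factor on $M$. The conformal covariance of $\Delta_g = \Delta_{c F^\ast g'}$ recalled in the introduction, carried out now with a potential, shows that $u_j$ solves $(-\Delta_g + q_j) u_j = 0$ in $M^\mathrm{int}$; writing $f_j := u_j|_{\partial M}$ and letting $w_j \in H^{-1/2}(\partial M)$ be the normal derivative determined by \eqref{id:normalDERIVATIVE}, we get $(f_j, w_j) \in C_{q_j}$. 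Since multiplication by the smooth, positive, bounded-below function $c^{-\frac{n-2}{4}}$ and pull-back by $F$ are bounded on $H^1$, one has $\norm{u_j}{}{H^1(M)} \lesssim \norm{v_j}{}{H^1(M')}$, with constant depending on $n, c, M, M', g', F$.

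Next I would derive the integral identity: testing \eqref{id:normalDERIVATIVE} for $w_1$ against $\phi = u_2$, the one for $w_2$ against $\phi = u_1$, and subtracting gives $\int_M (q_1-q_2)\, u_1 u_2 \, \dd V_g = \duality{w_1}{f_2} - \duality{w_2}{f_1}$, while the same manipulation applied to two solutions of a common Schr\"odinger equation yields the symmetry $\duality{w}{\hat f} = \duality{\hat w}{f}$ for their Cauchy data.

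The third step is to bring in $\dist(C_{q_1}, C_{q_2})$. Since $(f_2, w_2) \in C_{q_2}$, the definition of $\dist$ together with the homogeneity (linearity) of the Cauchy data sets provides, up to arbitrarily small error, a pair $(\hat f_2, \hat w_2) \in C_{q_1}$ with $\norm{f_2 - \hat f_2}{}{H^{1/2}(\partial M)} + \norm{w_2 - \hat w_2}{}{H^{-1/2}(\partial M)} \lesssim \dist(C_{q_1}, C_{q_2})\, \norm{f_2}{}{H^{1/2}(\partial M)}$. Letting $\hat u_2 \in H^1(M)$ realize $(\hat f_2, \hat w_2)$ as a solution for $q_1$ and applying the symmetry relation to $u_1$ and $\hat u_2$, the right-hand side of the integral identity telescopes to $\duality{w_1}{f_2 - \hat f_2} + \duality{\hat w_2 - w_2}{f_1}$. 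Combined with the elementary bounds $\norm{w_j}{}{H^{-1/2}(\partial M)} \lesssim (1 + \norm{q_j}{}{L^\infty(M)}) \norm{u_j}{}{H^1(M)} \leq Q \norm{u_j}{}{H^1(M)}$ (bound the Dirichlet form in \eqref{id:normalDERIVATIVE} and compose with a bounded extension operator) and $\norm{f_j}{}{H^{1/2}(\partial M)} \lesssim \norm{u_j}{}{H^1(M)}$ (trace theorem), together with the bound $\norm{u_j}{}{H^1(M)} \lesssim \norm{v_j}{}{H^1(M')}$ from the first step, this yields $\big| \int_M (q_1-q_2)\, u_1 u_2 \, \dd V_g \big| \lesssim \dist(C_{q_1},C_{q_2})\, Q\, \norm{v_1}{}{H^1(M')} \norm{v_2}{}{H^1(M')}$.

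Finally I would undo the conformal change: since $\dd V_g = c^{n/2} \dd V_{F^\ast g'}$, $u_1 u_2 = c^{-\frac{n-2}{2}} F^\ast(v_1 v_2)$ and $-\tfrac{n-2}{2} + \tfrac{n}{2} = 1$, pushing forward by $G = F^{-1}$ turns $\int_M (q_1-q_2)\, u_1 u_2 \, \dd V_g$ into $\int_{M'} c(q_1-q_2)\, v_1 v_2 \, \dd V_{g'}$ (with the stated abuse of notation), which is the claimed left-hand side. The routine parts are this conformal bookkeeping and the standard elliptic/trace estimates; the two places needing genuine care are tracking the powers of $c$ from the substitution $u = c^{-\frac{n-2}{4}} v$ and from the volume element so that they collapse to the single factor $c$, and replacing the honest DN maps in the classical identity by the Cauchy data sets, where one must use both the linearity of $C_{q_j}$ (to normalize the $H^{1/2}$ norm and invoke the definition of $\dist$) and the self-adjointness identity $\duality{w}{\hat f} = \duality{\hat w}{f}$ (to make the telescoping produce only differences controlled by $\dist(C_{q_1}, C_{q_2})$). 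I expect this second point to be the main obstacle.
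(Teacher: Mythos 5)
Your proposal is correct and follows essentially the same route as the paper: transfer the solutions via $u_j = c^{-\frac{n-2}{4}}F^\ast v_j$, use the weak formulation \eqref{id:normalDERIVATIVE} to get the Alessandrini-type identity and the symmetry relation within a single Cauchy data set, and then invoke the definition of $\dist(C_{q_1},C_{q_2})$ together with the trace bounds $\norm{u_j}{}{H^{1/2}(\partial M)} + \norm{\nu(u_j)}{}{H^{-1/2}(\partial M)} \lesssim (1+\norm{q_j}{}{L^\infty(M)})\norm{u_j}{}{H^1(M)}$. The only cosmetic differences are that you perform the conformal change of variables at the end rather than at the outset and approximate $(f_2,w_2)$ by an element of $C_{q_1}$ rather than the reverse; since $\dist$ is symmetric in $j,k$, this is immaterial.
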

\begin{proof}
Let $u_j$ with $j\in \{1,2\}$ be defined by $u_j = c^{-\frac{n - 2}{4}} F^\ast v_j$. Then $u_j $ belongs to $ H^1(M)$ and it is a solution to $(-\Delta_g + q_j) u_j = 0$ in $M$. Let us define $\nu(u_j)$ in the weak form as in \eqref{id:normalDERIVATIVE}, then
\begin{align*}
\Big\langle \nu(u_j) & \Big| u_k \Big\rangle = \int_M \langle d u_j, d u_k \rangle_g + q_j u_j u_k \, \dd V_g \\
&= \int_{M'} \langle d v_j, d v_k \rangle_{g'} - \langle dc^\frac{n- 2}{4}, d(c^{-\frac{n- 2}{4}} v_j v_k) \rangle_{g'} + cq_j v_j v_k \, \dd V_{g'}
\end{align*}
with $j, k \in \{ 1, 2 \}$. We are making an abuse of notation in the left hand side writing $u_k$ instead of $u_k|_{\partial M}$. Thus, it is immediate to get
\[ \Big\langle \nu(u_1)  \Big| u_2 \Big\rangle - \Big\langle \nu(u_2) \Big| u_1 \Big\rangle = \int_{M'} c(q_1 - q_2) v_1 v_2 \, \dd V_{g'}. \]
Since,
\[\Big\langle w_2 \Big| u_2 \Big\rangle - \Big\langle \nu(u_2) \Big| f_2 \Big\rangle = 0 \]
for any $(f_2, w_2) \in C_{q_2}$, we have that
\[ \Big\langle \nu(u_1) - w_2  \Big| u_2 \Big\rangle - \Big\langle \nu(u_2) \Big| u_1 - f_2 \Big\rangle = \int_{M'} c(q_1 - q_2) v_1 v_2 \, \dd V_{g'} \]
for any $(f_2, w_2) \in C_{q_2}$. By the definition of $\dist(C_{q_1}, C_{q_2})$, the estimate
\[\norm{u_j}{}{H^{1/2}(\partial M)} + \norm{\nu(u_j)}{}{H^{-1/2}(\partial M)} \lesssim (1 + \norm{q_j}{}{L^\infty(M)}) \norm{u_j}{}{H^1(M)},\]
and the definition of $u_j$ we get the statement of the proposition.
\end{proof}

\begin{proposition} \label{prop:int_forCAL} \sl Let $ g_1 $ and $ g_2 $ be two metrics on $ M $ satisfying $g_j = c_j F^\ast g'$. Let $ \Lambda_{g_1} $ and $ \Lambda_{g_2} $ denote their corresponding DN maps. Then, for any $ v_j \in H^1(M') $ with $ j \in \{ 1, 2 \} $ solving
\[-\Delta_{g'} v_j + c^{-\frac{n - 2}{4}}_j \Delta_{g'} c^\frac{n - 2}{4}_j v_j = 0 \]
in $M'_\mathrm{int}$, we have
\begin{align*}
\bigg| \int_{M'} &(c_1^{-\frac{n- 2}{4}} \Delta_{g'} c_1^\frac{n- 2}{4} - c_2^{-\frac{n- 2}{4}} \Delta_{g'} c_2^\frac{n- 2}{4}) v_1 v_2 \, \dd V_{g'} \bigg| \\
\lesssim & C \Big( \norm{\Lambda_{g_1} - \Lambda_{g_2}}{}{\ast} + \norm{c_2^{-\frac{n - 2}{4}} - c_1^{-\frac{n - 2}{4}}}{}{C^1(\partial M)} \\
& + \norm{\nu( c_2^\frac{n- 2}{4} ) - \nu( c_1^\frac{n- 2}{4} )}{}{L^\infty (\partial M)} \Big) \norm{v_1}{}{H^1(M')} \norm{v_2}{}{H^1(M')},
\end{align*}
where $ C = \max\{ 1 + \norm{c_j}{1/2}{L^\infty(M')} + \norm{d (\mathrm{log}\, c_j)}{}{L^\infty(M')}: j = 1, 2 \} $. Here we are making an abuse of notation which consists in writing $c_j$ instead of $G^\ast c_j$. The implicit constant in the inequality depends on $n$, $M$, $M'$, $ g' $ and $F$.
\end{proposition}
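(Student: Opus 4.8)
The plan is to imitate the proof of Proposition~\ref{prop:boundaryESq1q2}; the only genuinely new feature is that the two conformal factors need not agree on $\partial M$, and this mismatch is exactly what produces the extra boundary terms on the right-hand side. Throughout I identify $\partial M$ with $\partial M'$ via $F$, write $a_j := c_j^{\frac{n-2}{4}}$ (with the abuse $c_j := G^\ast c_j$ on $M'$ as in the statement), and set $\tilde q_j := a_j^{-1}\Delta_{g'}a_j = c_j^{-\frac{n-2}{4}}\Delta_{g'}c_j^{\frac{n-2}{4}}$, so that each $v_j$ solves $-\Delta_{g'}v_j+\tilde q_j v_j=0$ in $M'_\mathrm{int}$ and the quantity to be estimated is $\big|\int_{M'}(\tilde q_1-\tilde q_2)v_1v_2\,\dd V_{g'}\big|$. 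First I would pass to the Schr\"odinger picture: exactly as in the reduction recalled in the introduction, $u_j:=a_j^{-1}F^\ast v_j\in H^1(M)$ solves $-\Delta_{g_j}u_j=0$ in $M^\mathrm{int}$.

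Next I would run the computation from the proof of Proposition~\ref{prop:boundaryESq1q2} with $g=g_j=c_jF^\ast g'$ but with the test function $\phi=a_j^{-1}F^\ast\eta$ (for an \emph{arbitrary} $\eta\in H^1(M')$) in place of $u_k$, and then integrate by parts the conformal-factor term $\langle da_j,d(a_j^{-1}v_j\eta)\rangle_{g'}$, using $a_j^{-1}\Delta_{g'}a_j=\tilde q_j$. Since $u_j$ solves the homogeneous equation, the left-hand side of the resulting energy identity is a Dirichlet-to-Neumann pairing, and with $u_j|_{\partial M}=a_j^{-1}v_j|_{\partial M'}$ and $\phi|_{\partial M}=a_j^{-1}\eta|_{\partial M'}$ I expect to arrive at
\[
\Duality{\nu(v_j)}{\eta|_{\partial M'}}
=\Duality{\Lambda_{g_j}\big(a_j^{-1}v_j|_{\partial M'}\big)}{a_j^{-1}\eta|_{\partial M'}}
+\int_{\partial M'}\big(a_j^{-1}\,\nu(a_j)\big)\,v_j\eta\,\dd S_{g'},
\]
where $\nu(v_j)\in H^{-1/2}(\partial M')$ is the weak normal derivative of the Schr\"odinger solution $v_j$ defined as in \eqref{id:normalDERIVATIVE} (with $g'$ and $\tilde q_j$ in place of $g$ and $q$). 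On the other hand, since $v_1,v_2$ are $H^1$ solutions of the two Schr\"odinger equations, Green's identity on $(M',g')$ --- exactly as in Proposition~\ref{prop:boundaryESq1q2} --- gives $\int_{M'}(\tilde q_1-\tilde q_2)v_1v_2\,\dd V_{g'}=\Duality{\nu(v_1)}{v_2|_{\partial M'}}-\Duality{\nu(v_2)}{v_1|_{\partial M'}}$. Substituting the displayed identity with $(j,\eta)=(1,v_2)$ and $(j,\eta)=(2,v_1)$ then expresses $\int_{M'}(\tilde q_1-\tilde q_2)v_1v_2\,\dd V_{g'}$ purely in terms of the two DN pairings $\Duality{\Lambda_{g_1}(a_1^{-1}v_1)}{a_1^{-1}v_2}$, $\Duality{\Lambda_{g_2}(a_2^{-1}v_2)}{a_2^{-1}v_1}$ and the boundary integral of $\big(a_1^{-1}\nu(a_1)-a_2^{-1}\nu(a_2)\big)v_1v_2$.

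The remaining step, which I expect to be the main technical point (though it is bookkeeping rather than anything deep), is to reorganise this boundary expression so that the difference $\Lambda_{g_1}-\Lambda_{g_2}$ appears, the difficulty being that the two DN maps act on boundary data carried by the different weights $a_1^{-1}$ and $a_2^{-1}$. Here I would use the symmetry of the DN maps to rewrite $\Duality{\Lambda_{g_2}(a_2^{-1}v_2)}{a_2^{-1}v_1}$ as $\Duality{\Lambda_{g_2}(a_2^{-1}v_1)}{a_2^{-1}v_2}$, then write $a_2^{-1}v_i|_{\partial M'}=\phi_i+r_i$ with $\phi_i:=a_1^{-1}v_i|_{\partial M'}$ and $r_i:=(a_2^{-1}-a_1^{-1})v_i|_{\partial M'}$; expanding, the interior quantity becomes $\Duality{(\Lambda_{g_1}-\Lambda_{g_2})\phi_1}{\phi_2}$ plus three remainder pairings in which $\Lambda_{g_2}$ hits a factor $r_i$, plus the boundary integral. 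I would then estimate the main term by $\norm{\Lambda_{g_1}-\Lambda_{g_2}}{}{\ast}\norm{\phi_1}{}{H^{1/2}}\norm{\phi_2}{}{H^{1/2}}$, with $\norm{\phi_i}{}{H^{1/2}(\partial M')}\lesssim\norm{v_i}{}{H^1(M')}$ by the trace theorem and boundedness of $C^1$-multipliers on $H^{1/2}$; the three remainder pairings in the same way, bounding $\norm{r_i}{}{H^{1/2}}$ by $\norm{c_2^{-\frac{n-2}{4}}-c_1^{-\frac{n-2}{4}}}{}{C^1(\partial M)}\norm{v_i}{}{H^1(M')}$ and using the bound for $\norm{\Lambda_{g_j}}{}{\ast}$; and the boundary integral by splitting $a_1^{-1}\nu(a_1)-a_2^{-1}\nu(a_2)=a_1^{-1}\big(\nu(a_1)-\nu(a_2)\big)+(a_1^{-1}-a_2^{-1})\nu(a_2)$ and using $\norm{v_i|_{\partial M'}}{}{L^2(\partial M')}\lesssim\norm{v_i}{}{H^1(M')}$, which contributes precisely the terms $\norm{\nu(c_2^{\frac{n-2}{4}})-\nu(c_1^{\frac{n-2}{4}})}{}{L^\infty(\partial M)}$ and $\norm{c_2^{-\frac{n-2}{4}}-c_1^{-\frac{n-2}{4}}}{}{C^1(\partial M)}$. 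Collecting all the $c_j$-dependent constants into the quantity $C$ then gives the stated inequality; everything beyond this reorganisation is routine integration by parts and Sobolev trace estimates.
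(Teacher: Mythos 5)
Your argument is correct and follows essentially the same route as the paper's proof: reduce to harmonic functions $u_j=c_j^{-\frac{n-2}{4}}F^\ast v_j$, convert the DN pairings to the $(M',g')$ energy identity picking up the boundary term $\int_{\partial M'}c_j^{-\frac{n-2}{4}}\nu(c_j^{\frac{n-2}{4}})v_jv_k\,\dd A_{g'}$, use the symmetry of $\Lambda_{g_2}$ to produce the difference $\Lambda_{g_1}-\Lambda_{g_2}$, and absorb the mismatch of the boundary weights $c_1^{-\frac{n-2}{4}}$ versus $c_2^{-\frac{n-2}{4}}$ into remainder pairings controlled by $\normi{c_2^{-\frac{n-2}{4}}-c_1^{-\frac{n-2}{4}}}_{C^1(\partial M)}$. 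The only difference is bookkeeping --- the paper splits the boundary trace $u_k=c_j^{-\frac{n-2}{4}}v_k+(c_k^{-\frac{n-2}{4}}-c_j^{-\frac{n-2}{4}})v_k$ before writing the energy identity, while you first derive the clean identity for $\nu(v_j)$ and decompose afterwards --- and both yield the same main term, remainders, and boundary integral.
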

\begin{proof}
Let $u_j$ with $j\in \{1,2\}$ be defined by $u_j = c_j^{-\frac{n - 2}{4}} F^\ast v_j$. Then $u_j $ belongs to $ H^1(M)$, it is solution to $-\Delta_{g_j} u_j = 0$ in $M$ and 
\begin{align*}
\Big\langle \Lambda_{g_j}(u_j) & \Big| u_k \Big\rangle = \Big\langle \Lambda_{g_j}(u_j)  \Big| c_j^{-\frac{n - 2}{4}} v_k \Big\rangle + \Big\langle \Lambda_{g_j}(u_j) \Big| (c_k^{-\frac{n - 2}{4}} - c_j^{-\frac{n - 2}{4}})v_k \Big\rangle \\
&= \int_M \langle d u_j, d (c_j^{-\frac{n - 2}{4}} v_k) \rangle_{g_j} \, \dd V_{g_j} + \Big\langle \Lambda_{g_j}(u_j) \Big| (c_k^{-\frac{n - 2}{4}} - c_j^{-\frac{n - 2}{4}})v_k \Big\rangle
\end{align*}
with $j, k \in \{ 1, 2 \}$. Here we are making an abuse of notation writing $u_j$, $u_k$ and $ v_k $ instead of $u_j|_{\partial M}$, $u_k|_{\partial M}$ and $F^\ast v_k|_{\partial M}$ (on the boundary) or $ F^\ast v_k $ (in the interior). On the other hand,
\begin{align*}
\int_M \langle d u_j, d &(c_j^{-\frac{n - 2}{4}} v_k) \rangle_{g_j} \, \dd V_{g_j} \\
=& \int_{M'} \langle d v_j, d v_k \rangle_{g'} - \langle dc_j^\frac{n- 2}{4}, d(c_j^{-\frac{n- 2}{4}} v_j v_k) \rangle_{g'} \, \dd V_{g'} \\
=& \int_{M'} \langle d v_j, d v_k \rangle_{g'} + c_j^{-\frac{n- 2}{4}} \Delta_{g'} c_j^\frac{n- 2}{4} v_j v_k \, \dd V_{g'}\\
& - \int_{\partial M'} c_j^{-\frac{n- 2}{4}} \nu( c_j^\frac{n- 2}{4} ) v_j v_k \, \dd A_{g'}
\end{align*}
where $\dd A_{g'} $ is the contraction of $\dd V_{g'}$ with $\nu$. Again, we are making an abuse of notation consisting in writing $ \nu $ instead of $ F_\ast \nu $. Thus, it is immediate to get
\begin{align*}
\Big\langle \Lambda_{g_1}(&u_1) \Big| u_2 \Big\rangle - \Big\langle \Lambda_{g_2}(u_2) \Big| u_1 \Big\rangle \\
=& \Big\langle \Lambda_{g_1}(u_1) \Big| (c_2^{-\frac{n - 2}{4}} - c_1^{-\frac{n - 2}{4}})v_2 \Big\rangle
- \Big\langle \Lambda_{g_2}(u_2) \Big| (c_1^{-\frac{n - 2}{4}} - c_2^{-\frac{n - 2}{4}})v_1 \Big\rangle \\
& + \int_{M'} (c_1^{-\frac{n- 2}{4}} \Delta_{g'} c_1^\frac{n- 2}{4} - c_2^{-\frac{n- 2}{4}} \Delta_{g'} c_2^\frac{n- 2}{4}) v_1 v_2 \, \dd V_{g'} \\
& + \int_{\partial M'} ( c_2^{-\frac{n- 2}{4}} \nu( c_2^\frac{n- 2}{4} ) - c_1^{-\frac{n- 2}{4}} \nu( c_1^\frac{n- 2}{4} ) ) v_1 v_2 \, \dd A_{g'}.
\end{align*}
Since,
\[\Big\langle \Lambda_{g_2}(u_2) \Big| u_1 \Big\rangle = \Big\langle \Lambda_{g_2}(u_1) \Big| u_2 \Big\rangle\]
we have
\begin{align*}
\bigg| \int_{M'} &(c_1^{-\frac{n- 2}{4}} \Delta_{g'} c_1^\frac{n- 2}{4} - c_2^{-\frac{n- 2}{4}} \Delta_{g'} c_2^\frac{n- 2}{4}) v_1 v_2 \, \dd V_{g'} \bigg| \\
\leq & \norm{\Lambda_{g_1} - \Lambda_{g_2}}{}{\ast} \norm{u_1}{}{H^1(M)} \norm{u_2}{}{H^1(M)} \\
+&  \bigg| \Big\langle \Lambda_{g_1}(u_1) \Big| (c_2^{-\frac{n - 2}{4}} - c_1^{-\frac{n - 2}{4}})v_2 \Big\rangle \bigg|
+ \bigg| \Big\langle \Lambda_{g_2}(u_2) \Big| (c_1^{-\frac{n - 2}{4}} - c_2^{-\frac{n - 2}{4}})v_1 \Big\rangle \bigg| \\
+&  \norm{c_2^{-\frac{n- 2}{4}} \nu( c_2^\frac{n- 2}{4} ) - c_1^{-\frac{n- 2}{4}} \nu( c_1^\frac{n- 2}{4} )}{}{L^\infty (\partial M)} \norm{v_1}{}{H^1(M')} \norm{v_2}{}{H^1(M')},
\end{align*}
where $ \norm{\centerdot}{}{\ast} $ denotes the norm of the bounded operators from $ H^{1/2} (\partial M) $ to $ H^{-1/2} (\partial M) $. On one hand,
\[ \norm{u_j}{}{H^1(M)} \lesssim \left( 1 + \norm{c_j}{1/2}{L^\infty(M')} + \norm{d (\mathrm{log}\, c_j)}{}{L^\infty(M')} \right) \norm{v_j}{}{H^1(M')}. \]
On the other hand,
\begin{align*}
\bigg| \Big\langle \Lambda_{g_j}(u_j) &\Big| (c_2^{-\frac{n - 2}{4}} - c_1^{-\frac{n - 2}{4}})v_k \Big\rangle \bigg| \\
& \leq \norm{\Lambda_{g_j}u_j}{}{H^{-1/2} (\partial M)} \norm{c_2^{-\frac{n - 2}{4}} - c_1^{-\frac{n - 2}{4}}}{}{C^1(\partial M)} \norm{v_k}{}{H^{1/2} (\partial M')} \\
& \leq \norm{u_j}{}{H^1 (M)} \norm{c_2^{-\frac{n - 2}{4}} - c_1^{-\frac{n - 2}{4}}}{}{C^1(\partial M)} \norm{v_k}{}{H^1 (M')}.
\end{align*}
Putting together the above estimates we prove the statement of the proposition.
\end{proof}

The estimate given in the previous proposition has terms that are not immediately controlled by $\norm{\Lambda_{g_1} - \Lambda_{g_2}}{}{\ast}$. However, these terms only depend on the difference of the conformal factors on the boundary. Since there is stability for this problem on the boundary we get the following corollary.
\begin{corollary} \label{cor:boundaryESc1c2} \sl Under the assumptions of Proposition \ref{prop:int_forCAL} we have that, for any $ v_j \in H^1(M') $ with $ j \in \{ 1, 2 \} $ solving
\[-\Delta_{g'} v_j + c^{-\frac{n - 2}{4}}_j \Delta_{g'} c^\frac{n - 2}{4}_j v_j = 0 \]
in $M'_\mathrm{int}$ the estimate
\begin{align*}
\bigg| \int_{M'} &(c_1^{-\frac{n- 2}{4}} \Delta_{g'} c_1^\frac{n- 2}{4} - c_2^{-\frac{n- 2}{4}} \Delta_{g'} c_2^\frac{n- 2}{4}) v_1 v_2 \, \dd V_{g'} \bigg| \\
\lesssim & C \norm{\Lambda_{g_1} - \Lambda_{g_2}}{\lambda}{\ast} \norm{v_1}{}{H^1(M')} \norm{v_2}{}{H^1(M')}
\end{align*}
is satisfied with $0 < \lambda <  2^{-2^{n+3}}$ and $C$ depending on $\normi{c_j}_{C^3(M)}$ and $\inf_M c_j$. The implicit constant in the inequality depends on $n$, $M$, $M'$, $ g' $ and $F$.
\end{corollary}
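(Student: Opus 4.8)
The plan is to obtain the corollary directly from Proposition \ref{prop:int_forCAL}, the point being to absorb the two ``boundary'' terms on its right-hand side into a power of $\norm{\Lambda_{g_1}-\Lambda_{g_2}}{}{\ast}$. Of the three terms in that estimate only $\norm{\Lambda_{g_1}-\Lambda_{g_2}}{}{\ast}$ is already of the desired shape; the other two, $\norm{c_2^{-\frac{n-2}{4}}-c_1^{-\frac{n-2}{4}}}{}{C^1(\partial M)}$ and $\norm{\nu(c_2^{\frac{n-2}{4}})-\nu(c_1^{\frac{n-2}{4}})}{}{L^\infty(\partial M)}$, depend only on the $1$-jet of the conformal factors at the boundary, and I would first reduce them to $\norm{c_1-c_2}{}{C^1(\partial M)}+\norm{\nu(c_1)-\nu(c_2)}{}{L^\infty(\partial M)}$. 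This reduction is elementary: since $c_j$ is bounded below and bounded in $C^3(M)$, the maps $c\mapsto c^{\pm\frac{n-2}{4}}$ and $c\mapsto \nu\!\left(c^{\frac{n-2}{4}}\right)=\tfrac{n-2}{4}\,c^{\frac{n-2}{4}-1}\nu(c)$ are Lipschitz, in the relevant norms, on the range in question (chain rule plus the mean value theorem), with constants depending only on $\norm{c_j}{}{C^3(M)}$ and $\inf_M c_j$. One also notes that the prefactor $\max\{1+\norm{c_j}{1/2}{L^\infty(M')}+\norm{d(\log c_j)}{}{L^\infty(M')}\}$ coming from Proposition \ref{prop:int_forCAL} is itself bounded in terms of $\norm{c_j}{}{C^3(M)}$, $\inf_M c_j$, $g'$ and $F$, so it may be absorbed into the constant $C$ of the corollary.

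The crux is then a \emph{stability estimate for boundary determination} of the conformal factor, namely
\[
\norm{c_1-c_2}{}{C^1(\partial M)}+\norm{\nu(c_1)-\nu(c_2)}{}{L^\infty(\partial M)}\ \lesssim\ \norm{\Lambda_{g_1}-\Lambda_{g_2}}{\lambda}{\ast},\qquad 0<\lambda<2^{-2^{n+3}},
\]
valid whenever $\norm{c_j}{}{C^3(M)}+\norm{c_j^{-1}}{}{L^\infty(M)}\leq K$, with implicit constant depending on $K,n,M,M',g'$ and $F$. This is the conformal/Riemannian analogue of the classical continuous-dependence-at-the-boundary results of Sylvester and Uhlmann: $\Lambda_{g_j}$ stably determines the $1$-jet of $g_j=c_jF^\ast g'$, and hence that of $c_j$, at $\partial M$. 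I would either invoke this as a known fact or, if a self-contained proof is wanted, recover it by pairing the integral identity defining $\Lambda_{g_j}$ against highly oscillatory solutions concentrating near an arbitrary boundary point at a sequence of scales: the leading behaviour recovers $c_j|_{\partial M}$ (stability with a fixed positive exponent), a further term recovers $\nu(c_j)|_{\partial M}$ (with a degraded exponent), and finally Sobolev-to-$C^1$/$L^\infty$ control on $\partial M$ is obtained by interpolating the resulting weak estimate against the a priori $C^3(M)$ bound. The accumulation of these exponent losses is precisely what forces the small, explicit value $2^{-2^{n+3}}$ for the admissible range of $\lambda$.

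Finally I would assemble the pieces: substituting the boundary stability estimate and the reduction above into the bound of Proposition \ref{prop:int_forCAL} yields a right-hand side $\lesssim C\big(\norm{\Lambda_{g_1}-\Lambda_{g_2}}{}{\ast}+\norm{\Lambda_{g_1}-\Lambda_{g_2}}{\lambda}{\ast}\big)\norm{v_1}{}{H^1(M')}\norm{v_2}{}{H^1(M')}$; since $0<\lambda<1$ one has $t+t^{\lambda}\lesssim t^{\lambda}$ when $t=\norm{\Lambda_{g_1}-\Lambda_{g_2}}{}{\ast}\leq 1$, while for $t\geq 1$ the inequality holds trivially after enlarging the constant (the left-hand side of the corollary is bounded a priori by a constant depending on $\norm{c_j}{}{C^2(M)}$ and $\inf_M c_j$ times $\norm{v_1}{}{H^1(M')}\norm{v_2}{}{H^1(M')}$). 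This gives the stated estimate with $C$ depending on $\norm{c_j}{}{C^3(M)}$ and $\inf_M c_j$ as claimed. I expect the boundary stability estimate to be the only genuinely non-routine ingredient — everything else is bookkeeping — and within it the delicate point is tracking how the H\"older exponent degrades when passing from the value of the conformal factor to its first normal derivative and then converting the resulting Sobolev control into pointwise control on $\partial M$.
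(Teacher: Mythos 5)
Your proposal is correct and follows essentially the same route as the paper: the proof reduces the two boundary terms in Proposition \ref{prop:int_forCAL} to a H\"older-stable boundary determination estimate for $c_j$ and $\partial_{\nu} c_j$ with exponent $\lambda < 2^{-2^{n+3}}$, which the paper simply cites from Kang and Yun (Theorem 1.3 and formula (4.12) of \cite{KangYun}) rather than reproving via oscillatory solutions, and then concludes by the same elementary manipulations (Lipschitz dependence of $c \mapsto c^{\pm\frac{n-2}{4}}$ and absorption of the lower-order term $\|\Lambda_{g_1}-\Lambda_{g_2}\|_*$ into $\|\Lambda_{g_1}-\Lambda_{g_2}\|_*^{\lambda}$).
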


\begin{proof}
Fix a global coordinate system in $M$. We claim that one has 
\begin{equation*}
\normi{c_1-c_2}_{L^{\infty}(\partial M)} \leq C \normi{\Lambda_{g_1} - \Lambda_{g_2}}_*
\end{equation*}
and 
\begin{equation}
\normi{c_1 - c_2}_{C^1(\partial M)} + \normi{\partial_{\nu_g} c_1 - \partial_{\nu_g} c_2}_{L^{\infty}(\partial M)} \leq C \normi{\Lambda_{g_1} - \Lambda_{g_2}}_*^{\lambda} \label{es:stabilityBOUND}
\end{equation}
where the constant $C$ only depends on $n$, $\normi{c_j}_{C^3(M)}$,  $\normi{g}_{C^3(M)}$, $\inf_M c_j$, and the ellipticity constant $\inf_{x \in M} \inf_{v \in \R^n, |v|=1} g_{jk}(x) v^j v^k$ (these expressions involve the global coordinate system). Also, $\lambda = \lambda(n)$ is a number with $0 < \lambda(n) < 2^{-2^{n+3}}$. In fact, these two inequalites are an immediate consequence of the results of Kang and Yun \cite{KangYun}, see Theorem 1.3 and formula (4.12) in that paper.

From the second inequality above and from the a priori bounds for the coefficients, we obtain that 
\begin{multline*}
\normi{c_1^{-\frac{n - 2}{4}} - c_2^{-\frac{n - 2}{4}}}_{C^1(\partial M)} + \normi{\partial_{\nu_g} (c_1^{-\frac{n - 2}{4}}) - \partial_{\nu_g}(c_2^{-\frac{n - 2}{4}})}_{L^{\infty}(M)} \\
 \leq C \normi{\Lambda_{g_1} - \Lambda_{g_2}}_*^{\lambda}
\end{multline*}
for some constants $C$ and $\lambda$ as above. The result now follows from Proposition \ref{prop:int_forCAL}.
\end{proof}

\section{Complex geometrical optics solutions} \label{sec:CGOs}
In this section we review the properties of the CGOs constructed by Dos Santos Ferreira \textit{et al} in \cite{DSFKSaU} for admissible geometries. This construction has its roots in the paper \cite{KSU} by Kenig \textit{et al} in the context of the Calder\'on problem with partial data. However, we will follow a slight modification of the original argument given in \cite{KSaU}.

Throughout this section, $ M \subset \R \times M_0^\mathrm{int} $ will be an embedded $ n $-dimensional submanifold with boundary. The submanifold $ M $ will be assumed to be oriented and compact and it will be endowed with the Riemannian metric $ g = (e_\R \oplus g_0)|_M $. Thus, we are interested in constructing a family $ \{ u_\tau : \tau \geq \tau_0 \} \subset H^1(M) $ with $ \tau_0 \geq 1 $,
\begin{equation}
u_\tau = e^{-\tau(\varphi + i \psi)} (a + r_\tau) \label{id:CGOs}
\end{equation}
and such that
\begin{equation}
-\Delta_g u_\tau + q u_\tau = 0 \label{eq:schroedinger}
\end{equation}
in $ M^\mathrm{int} $ with $q \in L^\infty(M)$. Here $ \varphi $ and $ \psi $ are real-valued functions, $ a $ is a sort of complex amplitude and $ r_\tau $ is a correction term which becomes small when $ \tau $ increases.

Note that $u_\tau$ as in \eqref{id:CGOs} solves \eqref{eq:schroedinger} if and only if
\begin{align*}
e^{\tau(\varphi + i \psi)} (- \Delta_g +& \, q ) (e^{-\tau(\varphi + i \psi)} r_\tau) \\
=& \, (\Delta_g - q)a - \tau ( 2 \inner{d(\varphi + i\psi)}{da}_g + \Delta_g (\varphi + i\psi) a ) \\
&+ \tau^2 \inner{d(\varphi + i\psi)}{d(\varphi + i\psi)}_g a.
\end{align*}
The first idea in the construction of the CGOs is to arrange that the $\tau$ and $\tau^2$ terms on the right hand side of the previous identity vanish. Thus, for a suitable $\varphi$ we will look for $\psi$ and $a$ solving
\begin{equation}\label{eq:complex_eikonal}
|d \varphi|_g^2 = |d \psi|_g^2, \qquad  \langle d \varphi, d \psi \rangle_g = 0
\end{equation}
and
\begin{equation}\label{eq:transport}
2 \langle d( \varphi + i  \psi ), d a \rangle_g + \Delta_g (\varphi + i \psi) a = 0
\end{equation}
in $M$. The second idea is to provide a suitable $\varphi$ that allows us to solve the equation
\begin{equation}\label{eq:remainder}
e^{\tau(\varphi + i \psi)} (-\Delta_g + q) (e^{-\tau(\varphi + i \psi)} r_\tau) = (\Delta_g - q) a
\end{equation}
in $ M $. The appropriate candidates $\varphi$ to solve \eqref{eq:remainder} seem to be the limiting Carleman weights (LCWs for short), that were introduced in \cite{KSU} and characterized in \cite{DSFKSaU}. See \cite{Sa} for further discussion on LCWs.

At this point, we will choose $\varphi : \R \times M_0^\mathrm{int} \longrightarrow \R $ to be a LCW in $\R \times M_0^\mathrm{int}$. A natural choice is $ \varphi(s, \vartheta) = s $ for any $ (s, \vartheta) \in \R \times M_0^\mathrm{int} $. This choice makes the equations \eqref{eq:complex_eikonal} read as
\begin{equation*}
|d \psi|_g^2 = 1, \qquad  \partial_s \psi = 0.
\end{equation*}
The latter equation forces $ \psi : \R \times M_0^\mathrm{int} \longrightarrow \R $ to be independent of $ s $, and consequently, the former equation becomes a simple eikonal equation in $M_0^\mathrm{int}$. Since $M_0$ is simple, the function $\psi(s,\vartheta) = \dist_{g_0} (\omega, \vartheta) $ is a smooth solution of \eqref{eq:complex_eikonal} in $M_0^\mathrm{int}$  for any $\omega \in \partial M_0$. Here $ \dist_{g_0} (\omega, \centerdot) $ stands for the distance function from $ \omega $.

In order to solve \eqref{eq:transport}, we will choose local coordinates in $ \R \times M_0^\mathrm{int} $. Let $ \y_\omega : M_0^\mathrm{int} \longrightarrow \R^{n - 1} $ be Riemannian polar normal coordinates from the point $ \omega \in \partial M_0 $ with $\y_\omega(\vartheta) = (\rho, \theta_1, \dots, \theta_{n - 2})$ for any $\vartheta \in M_0^\mathrm{int}$. Since $\omega \in \partial M_0$ and $M_0$ is simple, one can choose $(\theta_1, \dots , \theta_{n - 2}) \in Q $ where $ Q = (0, \pi)^{n - 2} \subset \R^{n - 2}$. Define now $ \x_\omega : \R \times M_0^\mathrm{int} \longrightarrow \R^n $ as $\x_\omega(s, \vartheta) = (s, \y_\omega(\vartheta))$  for any $ (s, \vartheta) \in \R \times M_0^\mathrm{int} $. Note that in these coordinates $\psi (s,\rho,\theta_1, \dots, \theta_{n - 2}) = \rho$ and equation \eqref{eq:transport} becomes
\[ (\partial_s + i \partial_\rho) a + (\partial_s + i \partial_\rho) ( \log |g|^{1/4} ) a = 0 \]
in $\x_\omega (\R \times M_0^\mathrm{int})$. Multiplying by $ |g|^{1/4} $, we get the equation
\[ (\partial_s + i \partial_\rho) ( |g|^{1/4} a )  = 0. \]
Therefore we can choose $ a : \R \times M_0^\mathrm{int} \longrightarrow \C $ in such a way that in these coordinates $ a = |g|^{-1/4} \alpha \beta $ where $ \alpha = \alpha (s,\rho) $ satisfying $ (\partial_s + i \partial_\rho) \alpha = 0 $ in $ \R \times (0, R) $ with $R = \mathrm{diam}_{g_0}\, M_0$ and $\beta \in C^\infty_0 (Q)$.

We finally focus on equation \eqref{eq:remainder}. We write this equation in the following equivalent form
\begin{equation}\label{eq:remainder_1}
e^{\tau \varphi} (-\Delta_g + q) (e^{-\tau \varphi} \tilde{r}_\tau) = e^{- i \tau \psi} (\Delta_g - q) a,
\end{equation}
where $ \tilde{r}_\tau = e^{-i \tau \psi} r_\tau $. Let $ q \in L^\infty(\R \times M_0) $ still denote the extension by zero of $ q \in L^\infty(M) $. Let $ f $ denote the element in $ L^2(\R \times M_0) $ such that $ f_\tau = e^{- i \tau \psi} (\Delta_g - q) a $ almost everywhere in $ M $ and $ f_\tau = 0 $ almost everywhere else. By Theorem 4.1 in \cite{Sa} (see also Section 4 in \cite{KSaU}), we know that, for fixed $ \delta > 1/2 $, there exists a constant $C_0 \geq 1$ depending on $\delta, M, g_0$ such that, for all $ \tau \in \R $ with $ |\tau| \geq \max (1, C_0 \norm{q}{}{L^\infty(M)}) $ and $ \tau^2 $ out of the discrete set of the Dirichlet eigenvalues of $ -\Delta_{g_0} $, there exists a unique solution $ w_\tau \in H^1_{-\delta, 0} (\R \times M_0) $ of
\[ e^{\tau \varphi} (-\Delta_g + q) (e^{-\tau \varphi} w_\tau) = f_\tau \]
in $ \R \times M_0 $. Furthermore, this solution satisfies
\begin{equation*}
\begin{aligned}
\norm{w_\tau}{}{L^2_{-\delta}(\R \times M_0)} &\lesssim |\tau|^{- 1} \norm{(\Delta_g - q) a}{}{L^2(M)} \\
\norm{w_\tau}{}{H^1_{-\delta}(\R \times M_0)} &\lesssim \norm{(\Delta_g - q) a}{}{L^2(M)}.
\end{aligned}
\end{equation*}
For the sake of completeness, let us provide the definitions of the spaces introduced above
\begin{align*}
L^2_{-\delta} (\R \times M_0) &= \{ u \in L^2_\mathrm{loc} (\R \times M_0) : (1 + s^2)^{-\delta/2} u \in L^2(\R \times M_0) \}, \\
H^1_{-\delta} (\R \times M_0) &= \{ u \in L^2_{-\delta} (\R \times M_0) : |du| \in L^2_{-\delta} (\R \times M_0) \}, \\
H^1_{-\delta, 0} (\R \times M_0) &= \{ u \in H^1_{-\delta} (\R \times M_0) : u|_{\R \times \partial M_0} = 0 \};
\end{align*}
and their corresponding norms
\begin{align*}
\| u \|_{L^2_{-\delta} (\R \times M_0)} &= \| (1 + s^2)^{-\delta / 2} u \|_{L^2 (\R \times M_0)}, \\
\| u \|_{H^1_{-\delta} (\R \times M_0)} &= \| u \|_{L^2_{-\delta} (\R \times M_0)} + \| |du| \|_{L^2_{-\delta} (\R \times M_0)}.
\end{align*}

Finally, we end the construction of CGOs taking $ \tilde{r}_\tau = w_\tau|_{M^\mathrm{int}} $. The implicit constants only depend on $\delta, M$ and $g_0$.

We end this section by stating more succinctly the existence of the CGOs.

\begin{proposition} \label{prop:CGOs} \sl There exists a constant $C_0 \geq 1$ depending on $M$ and $g_0$ such that for 
\[|\tau| \geq \max(C_0 \norm{q}{}{L^\infty(M)}, 1) \qquad \tau^2 \notin \spec(-\Delta_{g_0}), \]
the function
\[u_\tau = e^{-\tau(\varphi + i \psi)} (a + r_\tau),\]
with $\varphi(s, \vartheta) = s$, $\psi(s,\vartheta) = \dist_{g_0} (\omega, \vartheta)$ and $a = |g|^{-1/4} \alpha \beta$ where $\alpha$ solves $(\partial_s + i \partial_\rho)\alpha = 0$ and $\beta \in C^\infty_0 (Q)$, is a solution of
\[ -\Delta_{g} u_\tau + q u_\tau = 0\]
in $M^{\inte}$. Moreover,
\begin{equation}
\norm{e^{-{i\tau \psi}} r_\tau}{}{H^k(M)} \lesssim |\tau|^{-1+k} \norm{(\Delta_g - q)a}{}{L^2(M)}
\label{es:L2remainder} 
\end{equation}
for $k = 0, 1$. The implicit constant only depends on $M$ and $g_0$.
\end{proposition}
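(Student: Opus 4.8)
The plan is to assemble the reductions carried out above into a single statement. First I would record the eikonal step: $\varphi(s,\vartheta)=s$ is a limiting Carleman weight on $\R\times M_0^{\mathrm{int}}$, and since $M_0$ is simple the function $\psi(s,\vartheta)=\dist_{g_0}(\omega,\vartheta)$ is smooth on $\R\times M_0^{\mathrm{int}}$ and solves \eqref{eq:complex_eikonal}, because $|d\psi|_g^2=1$ by the eikonal equation for the $g_0$-distance to $\omega$ while $\langle d\varphi,d\psi\rangle_g=\partial_s\psi=0$; this is exactly what kills the $\tau^2$ term in the conjugated equation. Next I would check that $a=|g|^{-1/4}\alpha\beta$ solves the transport equation \eqref{eq:transport}, which removes the $\tau$ term: in the coordinates $\x_\omega=(s,\y_\omega)$ one has $\psi=\rho$, and \eqref{eq:transport} is equivalent, after multiplying by $|g|^{1/4}$, to $(\partial_s+i\partial_\rho)(|g|^{1/4}a)=0$, which holds by the choice $(\partial_s+i\partial_\rho)\alpha=0$ since $\beta\in C_0^\infty(Q)$ depends only on $\theta$. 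The cutoff $\beta$ also guarantees that $a$ extends to a smooth, compactly supported amplitude on $M_0^{\mathrm{int}}$, so $(\Delta_g-q)a\in L^2(M)$. With $\varphi,\psi,a$ fixed in this way, the identity at the start of the section shows that $u_\tau$ of the form \eqref{id:CGOs} solves \eqref{eq:schroedinger} in $M^{\mathrm{int}}$ if and only if $r_\tau$ solves \eqref{eq:remainder}, equivalently $\tilde r_\tau=e^{-i\tau\psi}r_\tau$ solves the conjugated equation \eqref{eq:remainder_1}.

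It then remains to solve \eqref{eq:remainder_1} with a gain. Here I would invoke the solvability result for the Laplacian conjugated by the LCW $\varphi(s,\vartheta)=s$, namely Theorem 4.1 of \cite{Sa} (see also Section 4 of \cite{KSaU}): fixing $\delta>1/2$, for every $\tau$ with $|\tau|\geq\max(1,C_0\norm{q}{}{L^\infty(M)})$ and $\tau^2\notin\spec(-\Delta_{g_0})$ there is a unique $w_\tau\in H^1_{-\delta,0}(\R\times M_0)$ with $e^{\tau\varphi}(-\Delta_g+q)(e^{-\tau\varphi}w_\tau)=f_\tau$, where $f_\tau\in L^2(\R\times M_0)$ is the extension by zero of $e^{-i\tau\psi}(\Delta_g-q)a$, and $w_\tau$ obeys $\norm{w_\tau}{}{L^2_{-\delta}}\lesssim|\tau|^{-1}\norm{(\Delta_g-q)a}{}{L^2(M)}$ and $\norm{w_\tau}{}{H^1_{-\delta}}\lesssim\norm{(\Delta_g-q)a}{}{L^2(M)}$. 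I would then set $\tilde r_\tau=w_\tau|_{M^{\mathrm{int}}}$ and $r_\tau=e^{i\tau\psi}\tilde r_\tau$. Since $M$ is a compact submanifold with boundary contained in $\R\times M_0^{\mathrm{int}}$, the weights $(1+s^2)^{\pm\delta/2}$ are bounded above and below on $M$ and $\psi$ is smooth there, so $r_\tau\in H^1(M)$ and $u_\tau=e^{-\tau(\varphi+i\psi)}(a+r_\tau)\in H^1(M)$ is the desired solution of $-\Delta_g u_\tau+qu_\tau=0$ in $M^{\mathrm{int}}$.

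Finally I would read off \eqref{es:L2remainder}: on $M$ one has $\norm{e^{-i\tau\psi}r_\tau}{}{L^2(M)}=\norm{\tilde r_\tau}{}{L^2(M)}\lesssim\norm{w_\tau}{}{L^2_{-\delta}(\R\times M_0)}\lesssim|\tau|^{-1}\norm{(\Delta_g-q)a}{}{L^2(M)}$, and likewise $\norm{e^{-i\tau\psi}r_\tau}{}{H^1(M)}\lesssim\norm{\tilde r_\tau}{}{H^1(M)}\lesssim\norm{w_\tau}{}{H^1_{-\delta}(\R\times M_0)}\lesssim\norm{(\Delta_g-q)a}{}{L^2(M)}$, which is the claimed bound for $k=0,1$ with implicit constants depending only on the now-fixed $\delta$, on $M$, and on $g_0$. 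The only genuinely nontrivial ingredient is the solvability-with-gain estimate for the conjugated operator, i.e.\ the Carleman estimate attached to the limiting Carleman weight $s$ proved in \cite{Sa}; everything else is bookkeeping of the reductions above. The main point I would be careful about is the clean transition between the weighted spaces $H^1_{-\delta}(\R\times M_0)$ and ordinary Sobolev spaces on the compact piece $M$, and checking that the polar-coordinate description of $a$ together with the cutoff $\beta$ patch consistently so that $(\Delta_g-q)a\in L^2(M)$ with constants depending only on the stated data.
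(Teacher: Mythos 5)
Your proposal is correct and follows essentially the same route as the paper: Section \ref{sec:CGOs} itself constitutes the proof of Proposition \ref{prop:CGOs}, proceeding exactly through the eikonal/transport reductions for $\varphi(s,\vartheta)=s$, $\psi=\dist_{g_0}(\omega,\centerdot)$, $a=|g|^{-1/4}\alpha\beta$, and then invoking Theorem 4.1 of \cite{Sa} in the weighted spaces $H^1_{-\delta,0}(\R\times M_0)$ before restricting to the compact piece $M$. Your added remarks on the boundedness of the weights $(1+s^2)^{\pm\delta/2}$ on $M$ and the identification $e^{-i\tau\psi}r_\tau=\tilde r_\tau$ are exactly the bookkeeping the paper leaves implicit.
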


\section{Stability estimates} \label{sec:stability}
In this section we will provide the stability estimates for the problems under consideration, namely, controlling either the difference of the Schr\"odinger potentials or the difference of the conformal factors by their corresponding boundary data. The basic idea will be to plug the CGOs from Section \ref{sec:CGOs} into the inequalities given either in Proposition \ref{prop:boundaryESq1q2} or Corollary \ref{cor:boundaryESc1c2}.

Since the arguments to show the estimates announced for the two considered IBVP are quite similar, we will do both at the same time. Thus, if we are considering the IBVP associated to the Schr\"odinger operator, we agree the following notation:
\[ q = c (q_1 - q_2), \qquad \varepsilon = \dist (C_{q_1}, C_{q_2}), \]
$v_j$ is one of the solutions for
\[-\Delta_{g'} v_j + ( c^{-\frac{n - 2}{4}} \Delta_{g'} c^\frac{n - 2}{4}  + c q_j) v_j = 0 \]
constructed in Section \ref{sec:CGOs} and the implicit constants only depend on $n, M, M', F, g', c$ and $K$. Here $K$ is as in Theorem \ref{th:sch}. However, if we consider the simplification of the generalized Calder\'on problem, then 
\[ q = c_1^{-\frac{n- 2}{4}} \Delta_{g'} c_1^\frac{n- 2}{4} - c_2^{-\frac{n- 2}{4}} \Delta_{g'} c_2^\frac{n- 2}{4}, \qquad \varepsilon = \norm{\Lambda_{g_1} - \Lambda_{g_2}}{\lambda}{\ast} \]
where $\lambda$ is as in Corollary \ref{cor:boundaryESc1c2}, $v_j$ is one of the solutions for
\[-\Delta_{g'} v_j + c^{-\frac{n - 2}{4}}_j \Delta_{g'} c^\frac{n - 2}{4}_j v_j = 0 \]
constructed in Section \ref{sec:CGOs} and the implicit constants only depend on $n, M, M', F, g'$ and $K$. Here $K$ is as in Theorem \ref{th:cal}.

We now start with the argument. Let $\omega $ belong to $ \partial M_0$ and consider
\[ v_1 = e^{\tau (\varphi + i \psi)} (a_1 + r_1), \qquad v_2 = e^{-\tau(\varphi + i \psi)} (a_2 + r_2) \] constructed as in Section \ref{sec:CGOs}, where we choose $a_1 = \alpha \beta |g'|^{-1/4}$ and $a_2 = |g'|^{-1/4}$ in the coordinates used in that section. Then, either Proposition \ref{prop:boundaryESq1q2} or Corollary \ref{cor:boundaryESc1c2} implies
\begin{align*}
\bigg| \int_{M'} q a_1 a_2 \, \dd V_{g'} \bigg| \lesssim & \varepsilon \norm{v_1}{}{H^1(M')} \norm{v_2}{}{H^1(M')} + \norm{a_1}{}{L^2(M')} \norm{r_2}{}{L^2(M')} \\
& + \norm{r_1}{}{L^2(M')} + \norm{r_1}{}{L^2(M')} \norm{r_2}{}{L^2(M')}.
\end{align*}
Recall from Section \ref{sec:CGOs} that $ Q = (0, \pi)^{n - 2} \subset \R^{n - 2}$ and $ R = \mathrm{diam}_{g_0} M_0$. Moreover, introduce some other notation:
\[ S = \max \{ |s| : \exists \vartheta \in M_0,\, (s, \vartheta) \in M'  \} \qquad Q' = (- S, S) \times (0, R). \]
Now using the form of the solution $v_1$ and $v_2$ and estimates labelled with \eqref{es:L2remainder}, we get
\begin{equation}
\bigg| \int_{M'} q a_1 a_2 \, \dd V_{g'} \bigg| \lesssim (\varepsilon e^{k \tau} + \tau^{-1}) \norm{\alpha}{}{H^2(Q')} \norm{\beta}{}{H^2(Q)}
\label{es:firstsES}
\end{equation}
where $k > 2 (S + R)$, the implicit constant depends also on $R$ and $\tau \geq C_0 K $ with $\tau^2$ out of the discrete set of Dirichlet eigenvalues of $-\Delta_{g_0}$ and $C_0$ as in Proposition \ref{prop:CGOs}.

In order to extract information from the left hand side of \eqref{es:firstsES}, we choose $\alpha (s, \rho) = e^{-\sigma (\rho + is)}$ with $\sigma \in \R$ and check that it becomes
\begin{equation}
\bigg| \int_Q \beta(\theta) \int_0^R \hat{q}(\sigma, \rho, \theta) e^{-\sigma \rho} \, \dd \rho \, \dd \theta \bigg| \lesssim (\varepsilon e^{k \tau} + \tau^{-1}) e^{k |\sigma|} \norm{\beta}{}{H^2(Q)},
\label{es:LHSfirstsES}
\end{equation}
where $\hat{q}(\centerdot, \rho, \theta)$ denotes the Fourier transform of (the zero extension of) $q(\centerdot, \y_\omega^{-1}(\rho, \theta))$ in the $s$ variable and $\dd \theta$ is the euclidean volume form in $Q$. Note that the integrand of $\dd \theta$ on the left hand side of \eqref{es:LHSfirstsES} means, at the level of the manifold $M_0$, integrating the Fourier transform of $q$ along a geodesic (starting from $\omega$ with direction described by $\theta$) with respect to the weight $e^{-\sigma \rho}$. This brings naturally to this context the attenuated geodesic ray transform (see for instance \cite{DSFKSaU}, \cite{SaU}).

In order to define the attenuated geodesic ray transform, let us introduce some notation. The unit sphere bundle on $M_0$ is denoted by $SM_0$ and defined by
\[ SM_0 = \bigcup_{\vartheta \in M_0} S_\vartheta,\qquad S_\vartheta = \{ (\vartheta, X_\vartheta) : X_\vartheta \in T_\vartheta M_0 : |X_\vartheta|_{g_0} = 1 \}. \]
For notational convenience, we drop the subindex referring the point and we write $X$ instead of $X_\vartheta$. This manifold $SM_0$ has as boundary $\partial SM_0 = \{ (\vartheta, X) \in SM_0: \vartheta \in \partial M_0 \}$. Let $ N_0 $ denote the unit vector field on $ \partial M_0 $ pointing outward and define the manifold
\[ \partial_+ SM_0 = \{ (\vartheta, X) \in \partial SM_0 : g_0(X, N_0) \leq 0 \} \]
whose boundary is given by $ \{ (\vartheta, X) \in \partial SM_0 : g_0(X, N_0) = 0 \} $. Thus the space $C^\infty_0 ((\partial_+ SM_0)^\mathrm{int})$ denote the smooth functions on $\partial_+ SM_0$ vanishing near tangential directions.

Let $t \mapsto \gamma(t; \vartheta, X)$ denote the unit speed geodesic starting at $\vartheta \in M_0$ in direction $X$ and let $\T(\vartheta, X)$ be the time when geodesic exits $M_0$. Since $(M_0, g_0)$ is simple, $\T(\vartheta, X)$ is finite for every $(\vartheta, X) \in SM_0$. Let the geodesic flow be denoted by $\phi_t(\vartheta, X) = (\gamma(t; \vartheta, X), \dot{\gamma}(t; \vartheta, X))$, where $\dot{\gamma}(t; \vartheta, X)$ denote the tangent vector at $\gamma(t; \vartheta, X)$. Thus, the attenuated geodesic ray transform,  with attenuation $ - \sigma $, of a continuous function $ f $ defined on $ M_0 $ is defined by
\[ I_\sigma f (\vartheta, X) = \int_0^{\T(\vartheta, X)} f(\gamma(t; \vartheta, X)) e^{-\sigma t} \dd t, \qquad \forall (\vartheta, X) \in \partial_+ SM_0. \]
Before going further, let us introduce another operator. Let $h $ belong to $ C^\infty_0 ((\partial_+ SM_0)^\mathrm{int})$, define
\[ I^*_\sigma h (\vartheta) = \int_{S_\vartheta} e^{- \sigma \T(\vartheta, -X)} h (\phi_{-\T(\vartheta, -X)}(\vartheta, X)) \dd S_\vartheta (X) \]
where $\dd S_\vartheta$ denotes the natural Riemannian volume form on $S_\vartheta$.

For the point $\omega \in \partial M_0$ considered above and some $\delta > 0$, we take coordinates
\[ \Theta_\omega : S_\omega^\delta = \{ X \in S_\omega : g_0(X, N_0) < - \delta \} \longrightarrow Q \]
such that, given $b \in C^\infty_0 ((\partial_+ SM_0)^\mathrm{int})$ with $ \supp b(\omega, \centerdot) \subset \overline{S_\omega^\delta}$, $\beta$ can be chosen to satisfy 
\[ b(\omega, \centerdot) \dd S_\omega = \Theta^\ast (\beta \dd \theta), \]
where $\Theta^\ast$ the pull-back of $\Theta$. Thus we see that
\begin{equation}
\begin{aligned}
\int_Q &\beta(\theta) \int_0^R \hat{q}(\sigma, \rho, \theta) e^{-\sigma \rho} \, \dd \rho \, \dd \theta \\
&= \int_{S_\omega^\delta} b (\omega, X) \left( \int^{\T(\omega, X)}_0 \hat{q}(\sigma, \gamma(r; \omega, X)) e^{-\sigma r} \, \dd r \right) \, \dd S_\omega (X)
\end{aligned}
\label{eq:qa1a2RAY}
\end{equation}
where we agreed to denote $\mathcal{F} [q (\centerdot, \gamma(r; \omega, X))](\sigma)$ (the Fourier transform with respect to the $s$ variable) by $\hat{q}(\sigma, \gamma(r; \omega, X))$. Observe that $q$ is not good enough to give pointwise meaning to $I_\sigma (\hat{q}(\sigma, \centerdot))$, however, Fubini's theorem ensures that this is in $L^1 (\partial_+ SM_0)$. Thus, integrating \eqref{eq:qa1a2RAY} over $\partial M_0$ and using \eqref{es:LHSfirstsES} we can get
\begin{equation}
\begin{aligned}
\bigg| \int_{\partial_+ SM_0} &b (\omega, X) I_\sigma (\hat{q}(\sigma, \centerdot))(\omega, X) \, \dd (\partial SM_0) \bigg| \\
&\lesssim (\varepsilon e^{k \tau} + \tau^{-1}) e^{k |\sigma|} \int_{\partial M_0} \norm{b(\omega, \centerdot)}{}{H^2(S_\omega^\delta)}\, \dd A_{g_0},
\end{aligned}
\label{es:casiLLEGAMOS}
\end{equation}
where the implicit constant depends on $\delta$. Here $ \dd (\partial SM_0) $ denotes the natural Riemannian volume form on $\partial SM_0$ and $\dd A_{g_0}$ is the surface element on $\partial M_0$.

We next choose $b (\omega, X)$ to be $ \mu(\omega, X) I_\sigma f (\omega, X)$ with $ \mu(\omega,X) = - g_0(X,N_0)$ for $f \in C^\infty_0(M_1^\mathrm{int})$ with $M_1 $ a compact subset of $ M_0^\mathrm{int}$ to be chosen later. With this choice, we would like to show that
\begin{equation}
\int_{\partial_+ SM_0} I_\sigma f\, I_\sigma (\hat{q}(\sigma, \centerdot)) \mu \, \dd (\partial SM_0) = \int_{M_0} f \, I_\sigma^\ast I_\sigma (\hat{q}(\sigma, \centerdot)) \, \dd V_{g_0}.
\label{es:estamosLLEGANDO}
\end{equation}
From Lemma 5.4 in \cite{DSFKSa}, we know that
\begin{equation}
\int_{\partial_+ SM_0} I_\sigma f\, h\, \mu \, \dd (\partial SM_0) = \int_{M_0} f \, I_\sigma^\ast h \, \dd V_{g_0}
\label{id:adjuntaI}
\end{equation}
whenever $f \in C^\infty (M_0)$ and $h \in C^\infty_0 ((\partial_+ SM_0)^\mathrm{int})$. However, this is not enough for us since $I_\sigma (\hat{q}(\sigma, \centerdot))$ only belongs to $L^1(\partial_+ SM_0)$. Fortunately, this still holds for $h \in L^1(\partial_+ SM_0)$.
\begin{lemma} \sl Identity \eqref{id:adjuntaI} holds for $f \in C^\infty (M_0)$ and $h \in L^1(\partial_+ SM_0)$. Consequently, \eqref{es:estamosLLEGANDO} also holds.
\end{lemma}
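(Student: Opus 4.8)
The plan is to upgrade the adjoint identity \eqref{id:adjuntaI} from $h \in C^\infty_0((\partial_+ SM_0)^\mathrm{int})$ to $h \in L^1(\partial_+ SM_0)$ by a density/approximation argument, using the fact that for fixed $f \in C^\infty(M_0)$ both sides of \eqref{id:adjuntaI} are continuous linear functionals of $h$ with respect to the $L^1(\partial_+ SM_0)$ norm. First I would check the boundedness of the right-hand side: since $(M_0,g_0)$ is simple, $\T(\vartheta,-X)$ is bounded above, say by $R = \mathrm{diam}_{g_0} M_0$, so the weight $e^{-\sigma \T(\vartheta,-X)}$ is bounded by $e^{|\sigma| R}$; moreover the backward scattering relation $(\vartheta, X) \mapsto \phi_{-\T(\vartheta,-X)}(\vartheta, X)$ is a smooth map onto $\partial_+ SM_0$ whose Jacobian (with respect to the measures $\dd V_{g_0}\, \dd S_\vartheta(X)$ on $SM_0$ and $\mu \, \dd(\partial SM_0)$ on $\partial_+ SM_0$) is controlled — indeed this is exactly the content of \eqref{id:adjuntaI} read with $f \equiv 1$ composed appropriately, or more directly Santaló's formula. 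Hence $\bigl| \int_{M_0} f\, I^*_\sigma h \, \dd V_{g_0}\bigr| \le C(\sigma, f) \norm{h}{}{L^1(\partial_+ SM_0)}$. The left-hand side is even easier: $|I_\sigma f(\omega, X)| \le R \, e^{|\sigma|R} \norm{f}{}{L^\infty(M_0)}$ and $\mu \le 1$, so it is bounded by $C(\sigma,f)\norm{h}{}{L^1(\partial_+ SM_0)}$ as well.

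Next, given $h \in L^1(\partial_+ SM_0)$, I would pick a sequence $h_j \in C^\infty_0((\partial_+ SM_0)^\mathrm{int})$ with $h_j \to h$ in $L^1(\partial_+ SM_0)$ (standard, since smooth compactly supported functions are dense in $L^1$ on the manifold-with-boundary $\partial_+ SM_0$, and one can arrange the supports to avoid the glancing set because it has measure zero). Applying \eqref{id:adjuntaI} to each $h_j$ and passing to the limit using the two continuity bounds above gives \eqref{id:adjuntaI} for $h$. Finally, to deduce \eqref{es:estamosLLEGANDO}, I take $h = I_\sigma(\hat q(\sigma,\cdot))$, which by the Fubini remark preceding the lemma lies in $L^1(\partial_+ SM_0)$, and $f \in C^\infty_0(M_1^\mathrm{int}) \subset C^\infty(M_0)$; substituting into the just-proved extension of \eqref{id:adjuntaI} yields exactly \eqref{es:estamosLLEGANDO}, noting that $I^*_\sigma I_\sigma(\hat q(\sigma,\cdot))$ is then an $L^1$ (indeed locally integrable) function on $M_0$ paired against the smooth compactly supported $f$.

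The main obstacle is the boundedness of $h \mapsto \int_{M_0} f\, I^*_\sigma h\, \dd V_{g_0}$ in the $L^1(\partial_+ SM_0)$ norm of $h$: one must verify that the change of variables hidden in the definition of $I^*_\sigma$ — composition with the backward geodesic flow to the boundary, followed by integration over $S_\vartheta$ and then over $M_0$ — does not blow up near the glancing directions where $\T(\vartheta,-X) \to 0$ and $\mu \to 0$. Here simplicity of $(M_0,g_0)$ is what saves the day: the scattering relation is a diffeomorphism up to the boundary on the relevant sets and the degeneracy of $\mu$ is matched by the degeneracy of the measure, precisely as encoded already in \eqref{id:adjuntaI}; concretely, the cleanest route is to reuse \eqref{id:adjuntaI} itself with nonnegative test functions to bound the total mass of the pushforward measure, rather than computing the Jacobian by hand.
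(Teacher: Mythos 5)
Your density argument is correct in outline but takes a different route from the paper. The paper proves the identity for $h \in L^1(\partial_+ SM_0)$ directly: it sets $h_\psi(y,\eta) = h(\phi_{-\T(y,-\eta)}(y,\eta))$, rewrites the left-hand side as an integral over $\partial_+ SM_0$ of an inner integral along geodesics, and then applies the Santal\'o-type change of variables (the pullback of $\dd SM_0$ under $(t;x,\xi) \mapsto \phi_t(x,\xi)$ equals $\mu\,\dd(\partial_+ SM_0)\wedge \dd t$, Sharafutdinov's Lemma 3.3.2) together with two applications of Fubini; this simultaneously shows $h_\psi \in L^1(SM_0)$, hence that $I^*_\sigma h$ is defined a.e., and yields the identity in one stroke. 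You instead take the smooth-case identity as a black box and upgrade it by approximation, which is legitimate, but note that your ``main obstacle'' --- the $L^1$-continuity of $h \mapsto \int_{M_0} f\, I^*_\sigma h\, \dd V_{g_0}$ --- is not really cheaper than the paper's computation: to get it you must either invoke the same Santal\'o formula for the pushforward measure, or (as you suggest) bootstrap from \eqref{id:adjuntaI} with $f\equiv 1$ applied to nonnegative smooth approximants and pass to the limit by Fatou/monotone convergence, which is also what you need just to make sense of $I^*_\sigma h$ pointwise a.e.\ for $h \in L^1$. So the direct computation buys a shorter proof that handles definedness and the identity together, while your route buys modularity (reuse of the cited smooth-case lemma) at the cost of an extra measure-theoretic limiting step that must be spelled out carefully. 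Your treatment of the left-hand side ($|I_\sigma f| \le R e^{|\sigma|R}\|f\|_{L^\infty}$, $\mu \le 1$) and the final specialization to $h = I_\sigma(\hat q(\sigma,\cdot))$ are fine.
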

\begin{proof} It will be convenient to introduce the following notation
\[h_\psi (y, \eta) = h (\phi_{-\T(y, -\eta)}(y, \eta)) \]
for all $(y, \eta) \in SM_0$. Note that $h_\psi (\phi_t (x, \xi)) = h (x, \xi)$ for all $(x, \xi) \in \partial_+ SM_0$. Hence
\[\int_{\partial_+ SM_0} I_\sigma f\, h\, \mu \, \dd (\partial SM_0) = \int_{\partial_+ SM_0} J\, \mu \, \dd (\partial SM_0) \]
with
\[J(x, \xi) = \int_0^{\T(x,\xi)} f(\gamma(t; x, \xi)) e^{- \sigma \T(\gamma(t;x,\xi), - \dot{\gamma}(t;x,\xi))} h_\psi(\phi_t(x,\xi)) \, \dd t. \]
It was proven in Lemma 3.3.2 from \cite{Sh} (see also Lemma A.8 in \cite{DaPaStU}), that the pull-back of $\dd SM_0$ through the diffeomorphism $(t;x,\xi) \in D \longrightarrow \phi_t (x, \xi) \in SM_0 \setminus T\partial M_0 $ with
\[ D = \{ (t;x,\xi) : (x, \xi) \in \partial_+ SM_0, \, t \in [0, \T(x,\xi)] \} \]
is given by $\mu \, \dd( \partial_+ SM_0 )\wedge \dd t$. Therefore, $h_\psi \in L^1(SM_0)$ since $h_\psi$ is constantly equal to $h(x, \xi)$ through $\{ \phi_t(x, \xi) : t \in [0, \T(x,\xi) \}$ and $h \in L^1(\partial_+ SM_0)$, and
\begin{align*}
\int_{\partial_+ SM_0} J\, \mu \, \dd (\partial SM_0)& = \int_{SM_0} f(y) e^{-\sigma \T(y, -\eta)} h_\psi(y, \eta) \, \dd SM_0(y, \eta)\\
& = \int_{M_0} f \, I_\sigma^\ast h \, \dd V_{g_0}
\end{align*}
by using Fubini's theorem twice. This proves that \eqref{id:adjuntaI} holds for $h \in L^1(\partial_+ SM_0)$. Identity \eqref{es:estamosLLEGANDO} is then an immediate consequence.
\end{proof}

Finally a straightforward computation in normal coordinates based at $\omega$ gives
\[ \bigg| \int_{M_0} f \, I_\sigma^\ast I_\sigma (\hat{q}(\sigma, \centerdot)) \, \dd V_{g_0} \bigg| \lesssim (\varepsilon e^{k \tau} + \tau^{-1}) e^{k |\sigma|} \norm{f}{}{H^2(M_0)} \]
for $k > 2 (S + R)$, $f \in C^\infty_0(M_1^\mathrm{int})$ and $\tau \geq C_0 K $ with $\tau^2$ out of the discrete set of Dirichlet eigenvalues of $-\Delta_{g_0}$ and $C_0$ as in Proposition \ref{prop:CGOs}. Next we will make a choice for $\tau$ in terms of $\varepsilon$. Firstly note that $k$ can be chosen larger if necessary to avoid that $( |\log \varepsilon|/(2k) )^2$ is in the set of Dirichlet eigenvalues of $-\Delta_{g_0}$. Moreover, if $\varepsilon \leq e^{-2 k C_0 K}$ we can take
\[ \tau = \frac{1}{2k} |\log \varepsilon| \]
to obtain
\begin{equation}
\norm{I_\sigma^\ast I_\sigma (\hat{q}(\sigma, \centerdot))}{}{H^{-2}(M_1)} \lesssim (\varepsilon^{1/2} + |\log \varepsilon|^{-1}) e^{k |\sigma|}.
\label{es:stabNORMAL}
\end{equation}

The idea now will be to use the ellipticity of the normal operator $I^\ast_\sigma I_\sigma$ to obtain an estimate for $\hat{q}(\sigma, \centerdot)$. To do so, choose $M_1 \subset M_0^\mathrm{int}$ to satisfy the following assertion: there exist $M_2$ and $M_3$ two compact subsets of $M_1^\mathrm{int}$ such that
\[ M' \subset (-S, S) \times M_3^\mathrm{int},\qquad M_3 \subset M_2^\mathrm{int}. \]
Note that $\supp \hat{q}(\sigma, \centerdot) \subset M_3$ for all $\sigma \in \R$.
\begin{lemma} \label{lem:normalOPstability} \sl
Let $M_1, M_2$ and $M_3$ as above. Then there exists a $\delta_0 > 0$ such that
$$
\left\lVert f \right\rVert_{H^{-k}(M_1)} \lesssim \max_{|\sigma| \leq \delta_0} \left\lVert I_{\sigma}^* I_{\sigma} f \right\rVert_{H^{-k+1}(M_1)}, \quad \forall f \in H^{-k}_{M_3}(M_1).
$$
The implicit constant here depends on $\delta_0$.
\end{lemma}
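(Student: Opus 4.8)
The plan is to exploit the ellipticity of the normal operator $N_\sigma := I_\sigma^\ast I_\sigma$ together with the injectivity of the attenuated geodesic ray transform for small attenuations, via the standard ``ellipticity plus compactness'' scheme. Fix an open set $U$ with $M_1 \Subset U \Subset M_0^{\mathrm{int}}$. On $U$ each $N_\sigma$ is a classical pseudodifferential operator of order $-1$ whose principal symbol is elliptic and \emph{independent} of $\sigma$; this is the usual description of the normal operator of the geodesic ray transform on a simple manifold (see \cite{Sh} and \cite{DSFKSaU}), the attenuation only affecting lower order terms, so that $N_\sigma - N_0$ has order $-2$ and $\sigma \mapsto N_\sigma$ is, say, holomorphic as a family of such operators. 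Moreover $I_0$ is injective on functions on the simple manifold $M_0$, and applying analytic Fredholm theory to the holomorphic family $\{N_\sigma\}$ one finds $\delta_0 > 0$ such that $N_\sigma$ is injective on $H^{-k}_{M_3}(M_1)$ for all $|\sigma| \le \delta_0$, which is exactly the small attenuation injectivity used in \cite{DSFKSaU}. I will prove the formally stronger statement that, uniformly for $|\sigma| \le \delta_0$,
\[ \normi{f}_{H^{-k}(M_1)} \lesssim \normi{N_\sigma f}_{H^{-k+1}(M_1)}, \qquad f \in H^{-k}_{M_3}(M_1), \]
which immediately yields the inequality in the lemma.

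The first step is a parametrix estimate. Choose $\chi \in C^\infty_0(U)$ with $\chi \equiv 1$ on a neighbourhood of $M_2$, and let $P$ be a properly supported parametrix of order $1$ for the ($\sigma$-independent, elliptic) principal part of $N_0$ in $U$, so that $\chi P \chi N_0 = \chi\,\mathrm{Id} + R$ with $R$ of order $-1$. Writing $N_\sigma = N_0 + (N_\sigma - N_0)$, using that $\chi \equiv 1$ on $\supp f \subset M_3$, that $P(N_\sigma - N_0)$ has order $-1$, and that all the error operators are bounded from $H^{-k}(U)$ to $H^{-k+1}(U)$ uniformly in $|\sigma| \le \delta_0$, we obtain
\[ \normi{f}_{H^{-k}(M_1)} \lesssim \normi{N_\sigma f}_{H^{-k+1}(M_1)} + \normi{f}_{H^{-k-1}(M_1)}, \qquad f \in H^{-k}_{M_3}(M_1), \]
with constant uniform in $|\sigma| \le \delta_0$. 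Here $M_2$ provides the room needed to place the cutoff $\chi$ between $M_3$ and $U$.

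The second step removes the term $\normi{f}_{H^{-k-1}(M_1)}$ by contradiction. If the claimed estimate failed, there would exist $\sigma_j \in [-\delta_0,\delta_0]$ and $f_j \in H^{-k}_{M_3}(M_1)$ with $\normi{f_j}_{H^{-k}(M_1)} = 1$ and $\normi{N_{\sigma_j} f_j}_{H^{-k+1}(M_1)} \to 0$. All the $f_j$ being supported in the fixed compact set $M_3$, Rellich's theorem provides a subsequence along which $f_j \to f$ in $H^{-k-1}(M_1)$ and $\sigma_j \to \sigma_\ast \in [-\delta_0,\delta_0]$; then $f \in H^{-k}_{M_3}(M_1)$ with $\normi{f}_{H^{-k}(M_1)} \le 1$ by weak-$\ast$ compactness. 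Since $N_{\sigma_j}$ has order $-1$ and depends continuously on $\sigma$, decomposing $N_{\sigma_j} f_j - N_{\sigma_\ast} f = N_{\sigma_j}(f_j - f) + (N_{\sigma_j} - N_{\sigma_\ast}) f$ and estimating each term in $H^{-k}(M_1)$ (the first via $\normi{f_j - f}_{H^{-k-1}(M_1)} \to 0$) gives $N_{\sigma_j} f_j \to N_{\sigma_\ast} f$ in $H^{-k}(M_1)$; but $N_{\sigma_j} f_j \to 0$ in $H^{-k+1}(M_1) \hookrightarrow H^{-k}(M_1)$, so $N_{\sigma_\ast} f = 0$, and hence $f = 0$ by injectivity of $N_{\sigma_\ast}$ on $H^{-k}_{M_3}(M_1)$ (which follows from that of $I_{\sigma_\ast}$ via elliptic regularity). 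Plugging this back into the parametrix estimate gives $\normi{f_j}_{H^{-k}(M_1)} \lesssim \normi{N_{\sigma_j} f_j}_{H^{-k+1}(M_1)} + \normi{f_j}_{H^{-k-1}(M_1)} \to 0$, contradicting $\normi{f_j}_{H^{-k}(M_1)} = 1$.

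The main obstacle is the input concerning $\delta_0$: the injectivity of $I_\sigma$ for $|\sigma| \le \delta_0$ is only available for small attenuation, and the constant it produces deteriorates as $\delta_0$ grows, since the resolvent of the holomorphic Fredholm family $\{N_\sigma\}$ blows up as $\sigma$ approaches the first attenuation where injectivity is lost. This is precisely the source of the exponential dependence on $\delta_0$ of the implicit constant noted after the statement. Everything else, namely the pseudodifferential description of $N_\sigma$ on compact subsets of $M_0^{\mathrm{int}}$ and the compactness argument, is routine; the only mild care needed is to keep all pseudodifferential computations inside $U \Subset M_0^{\mathrm{int}}$, away from $\partial M_0$, which is why the nested auxiliary sets $M_3 \Subset M_2 \Subset M_1$ are introduced.
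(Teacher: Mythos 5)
Your overall strategy---elliptic parametrix for the normal operator of order $-1$, compactness, and the injectivity of $I_\sigma$ for small attenuations from \cite{DSFKSaU}---is exactly the paper's. The packaging differs slightly: you prove the formally stronger, pointwise-in-$\sigma$ uniform estimate $\normi{f}_{H^{-k}(M_1)} \lesssim \normi{N_\sigma f}_{H^{-k+1}(M_1)}$ by running the compactness--contradiction argument by hand, which requires injectivity of $N_\sigma$ for \emph{every} $|\sigma| \le \delta_0$; the paper instead views $f \mapsto (\sigma \mapsto N_\sigma f)$ as a single bounded injective operator into $C([-\delta_0,\delta_0];H^{-k+1}(M_2))$ and quotes the abstract stability lemma of Stefanov--Uhlmann (\cite{StU}, Lemma 2), which only needs injectivity of the family and delivers directly the $\max_{|\sigma|\le\delta_0}$ on the right-hand side. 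Both routes are fine; yours gives a marginally stronger conclusion at the cost of a slightly stronger input (also available from \cite{DSFKSaU}).

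One claim in your parametrix step is wrong and needs repair. The principal symbol of $N_\sigma = I_\sigma^*I_\sigma$ is \emph{not} independent of $\sigma$: with the parametrization by $\partial_+SM_0$, the attenuated transform is a weighted ray transform with weight $w_\sigma(y,\eta) = e^{-\sigma\,\T(y,-\eta)}$, which is a nonconstant positive function on $SM_0$ for $\sigma \neq 0$, so the weight entering the diagonal singularity of the Schwartz kernel of $N_\sigma$ genuinely depends on $\sigma$. Consequently $N_\sigma - N_0$ is in general of order $-1$, not $-2$, and $P(N_\sigma - N_0)$ is of order $0$, so the error it produces is $\lesssim \normi{f}_{H^{-k}}$ and is not automatically negligible. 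This does not sink the argument: either observe that the order-$(-1)$ norm of $N_\sigma - N_0$ is $O(|\sigma|)$, so for $\delta_0$ small (which it is anyway, being dictated by the injectivity range) the term can be absorbed into the left-hand side; or, better, use for each $\sigma$ a parametrix of the elliptic operator $N_\sigma$ itself, with seminorms locally uniform in $\sigma$ since the full symbol depends smoothly on $\sigma$ and the weight is positive---this is the form in which the parametrix is provided by \cite{FStU}, which is what the paper invokes. With that correction your proof is complete.
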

Recall that $H^{-k}_{M_3}(M_1)$ is the space of elements of $H^{-k}(M_1)$ whose support is contained in $M_3$.
\begin{proof}
Write $N = I_{\sigma}^* I_{\sigma}$. By Proposition 2 in \cite{FStU}, we know that $N$ is an elliptic pseudodifferential operator of order $-1$ in $M^{\text{int}}_1$, and there is a pseudodifferential operator $Q$ of order $1$ in $M^{\text{int}}_1$ and an operator $R$ with kernel in $C^{\infty}_0(M^{\text{int}_1} \times M^{\text{int}}_1)$ such that, if $f \in H^{-k}_{M_3}(M_1)$, then
$$
\chi QNf = f + \chi Rf
$$
with $\chi \in C^{\infty}_0(M^{\text{int}}_2)$ with $\chi = 1$ near $M_3$. Therefore
$$
\left\lVert f \right\rVert_{H^{-k}(M_3)} \lesssim \left\lVert Nf \right\rVert_{H^{-k+1}(M_2)} + \left\lVert f \right\rVert_{H^\lambda (M_2)},
$$
for any fixed real $\lambda$. Let $X = H^{-k}_{M_3}(M_1)$, $Y = C ( [-\delta_0, \delta_0]; H^{-k+1}(M_2))$, and $Z = H^\lambda(M_2)$. The operator $N$ is bounded from $X$ to $Y$ since $N$ is of order $-1$ and $M_2 \subset M_1^\mathrm{int}$, and the injection from $X$ to $Z$ is compact if $\lambda$ is small enough. Also, $N$ is injective, since for any $f \in X$ with $Nf = 0$ one has $f \in C^{\infty}_0(M^{\text{int}}_1)$ by elliptic regularity and
$$
\left\lVert I_{\sigma} f \right\rVert_{L^2_{\mu}(\partial_+(SM_1))}^2 = (Nf, f)_{L^2(M_1)} = 0
$$
and $I_{\sigma} f = 0$. By \cite{DSFKSaU}, we know that there exists a $\delta_0 > 0$ such that if $|\sigma| \leq \delta_0$ then $f = 0$. By using Lemma 2 in \cite{StU},  we have 
$$
\left\lVert f \right\rVert_{H^{-k}(M_1)} \lesssim \max_{|\sigma| \leq \delta_0} \left\lVert Nf \right\rVert_{H^{-k+1}(M_2)}.
$$
This implies the result.
\end{proof}

Therefore we know the following estimate
\begin{equation}
\norm{\hat{q}(\sigma, \centerdot)}{}{H^{-3}(M_1)} \lesssim \varepsilon^{1/2} + |\log \varepsilon|^{-1} \, \qquad \forall |\sigma| \leq \delta_0.
\label{es:Fourier_q}
\end{equation}
Let us remark that the the implicit constant here also depends on $\delta_0$ and the estimate holds when $\varepsilon \leq e^{-2 k C_0 K}$.

The next step will be to control a mixed norm for $q$. Since the range of $\sigma$ for which \eqref{es:Fourier_q} holds can be very small, we will need to make use of the analytic properties of $\hat{q}(\sigma, \centerdot)$ (recall that $q$ was compactly supported) to control $\hat{q}(\sigma + i, \centerdot)$ for $|\sigma| \leq \tilde R$ with $\tilde R$ arbitrarily large. This will be enough to bound a mixed norm for $q$ by the boundary data. In \cite{HW} Heck and Wang used a result by Vessella \cite{V} to control an arbitrary large set of low frequencies by a small one. Our approach here is slightly different and is based on properties of subharmonic functions. The argument is due to Dos Santos Ferreira and has been used in \cite{CaDSFR} to deal with a similar situation.

The first step will be to obtain from \eqref{es:Fourier_q} an estimate for certain subharmonic function. Note that \eqref{es:Fourier_q} implies
\begin{equation}
| \langle \hat{q}(\sigma, \centerdot), f \rangle | \lesssim (\varepsilon^{1/2} + |\log \varepsilon|^{-1}) \| f \|_{H^3(M_1)} \label{es:sh1}
\end{equation}
for all $|\sigma| \leq \delta_0$ and $f \in C^\infty_0(M_1)$. On the other hand, since $q$ is compactly supported in $[-S, S] \times M_1$, the analytic extension of the Fourier transform of $q$ in the Euclidean direction satisfies
\begin{equation}
| \langle \hat{q}(\sigma + i \eta, \centerdot), f \rangle | \lesssim e^{S \eta} \| f \|_{H^3(M_1)} \label{es:sh2}
\end{equation}
for all $\sigma + i \eta \in \C$ such that $\eta \geq 0$. Define
\[F(\sigma, \eta) = \log \frac{| \langle \hat{q}(\sigma + i \eta, \centerdot), f \rangle |}{C \| f \|_{H^3(M_1)}} - S\eta, \qquad (\sigma, \mu) \in \R^2,\]
where $C$ is the sum of the implicit constants in \eqref{es:sh1} and \eqref{es:sh2}. Note that $F$ is subharmonic and satisfies
\begin{align*}
F(\sigma, \eta) &\leq \log (\varepsilon^{1/2} + |\log \varepsilon|^{-1}) & & 0 \leq \sigma \leq \delta_0, \\
F(\sigma, \eta) &\leq 0 & & \sigma \in \R,\, \eta \geq 0.
\end{align*}
Next, we will show a lemma that allows to transmit the smallness of $F$ in the segment $\{ (\sigma, 0) : |\sigma| \leq \delta_0 \}$ to $\{ (\sigma, 1) : |\sigma| \leq \tilde{R} \}$ where $\tilde{R}$ is arbitrarily large.

\begin{lemma} \label{lem:subharmonic} \sl
Let $b$ and $\delta$ be positive constants and let $F$ be a subharmonic function in an open neighbourhood of
\[\{ (x, y) \in \R^2 : y \geq 0 \}\]
such that
\begin{align*}
F(x, 0) &\leq -b & & 0 \leq x \leq \delta, \\
F(x, y) &\leq 0 & & x\in \R,\,  y \geq 0.
\end{align*}
Then
\[F(x,y) \leq -\frac{b}{\pi} \left( \arctan \frac{x+ \delta}{ y} - \arctan\frac{x-\delta}{y} \right)\]
for all $(x, y) \in \R^2$ such that $y \geq 0$.
\end{lemma}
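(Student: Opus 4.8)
The plan is to represent the subharmonic function $F$ via an explicit harmonic majorant on the half-plane and then compare. First I would note that the function
\[
h(x,y) = -\frac{b}{\pi}\left(\arctan\frac{x+\delta}{y} - \arctan\frac{x-\delta}{y}\right)
\]
is precisely the Poisson integral over the upper half-plane $\{y>0\}$ of the boundary datum $u_0$ defined by $u_0 = -b$ on the segment $[-\delta,\delta]$ wait — I must be careful: the hypothesis only gives smallness on $[0,\delta]$, not on $[-\delta,\delta]$. So instead I would take $u_0 = -b\,\mathbf{1}_{[0,\delta]}$ on $\R$, and let $h$ be its Poisson extension to the half-plane, namely $h(x,y) = \frac{1}{\pi}\int_{\R} \frac{y}{(x-t)^2+y^2}\,u_0(t)\,\dd t = -\frac{b}{\pi}\left(\arctan\frac{x}{y} - \arctan\frac{x-\delta}{y}\right)$. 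Hmm, that is not quite the claimed bound either; the claimed right-hand side has $\arctan\frac{x+\delta}{y}$, which corresponds to $u_0 = -b\,\mathbf{1}_{[-\delta,\delta]}$. One route to reconcile this: since $F(x,y)\le 0$ everywhere in the half-plane and in particular $F(x,0)\le 0 \le -b + b$, the bound on $[-\delta,0]$ is weaker than on $[0,\delta]$, but the statement only asserts an upper bound with the larger interval $[-\delta,\delta]$, which is the weaker (less negative away from the strip) of the two; so it suffices to prove it with $u_0 = -b\,\mathbf{1}_{[-\delta,\delta]}$ replaced by the even smaller-in-absolute-value candidate, and I would simply use the Poisson extension of $-b\,\mathbf{1}_{[0,\delta]}$, which is pointwise $\le$ the Poisson extension of $-b\,\mathbf{1}_{[-\delta,\delta]}$ — wait, no: $-b\,\mathbf{1}_{[0,\delta]} \ge -b\,\mathbf{1}_{[-\delta,\delta]}$ pointwise, so its Poisson extension is $\ge$, which goes the wrong way. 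The clean fix is therefore to prove $F \le h$ with $h$ the Poisson extension of $-b\,\mathbf{1}_{[-\delta,\delta]}$ is false as stated unless we also know $F(x,0)\le -b$ on $[-\delta,0]$; so I will simply prove the \emph{stronger} inequality $F(x,y)\le -\frac{b}{\pi}\left(\arctan\frac{x}{y}-\arctan\frac{x-\delta}{y}\right)$, observe that the right-hand side is $\le -\frac{b}{\pi}\left(\arctan\frac{x+\delta}{y}-\arctan\frac{x-\delta}{y}\right)$ — again wrong sign. Let me not belabor this in the sketch: the honest statement to aim at is that $F$ is bounded above by the Poisson extension of its boundary values' upper bound, and since the claimed formula is the Poisson extension of $-b$ on $[-\delta,\delta]$ and $0$ elsewhere, \emph{this requires} $F(x,0)\le -b$ on all of $[-\delta,\delta]$; I would flag that the intended hypothesis is symmetric, or that $\delta$ in the conclusion is to be read as the half-length after recentering.

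Setting that bookkeeping aside, here is the core argument. Let $h$ denote the Poisson integral over the upper half-plane of the bounded, compactly supported boundary function $u_0$ which equals $-b$ on the relevant interval and $0$ elsewhere; a direct computation of $\frac{1}{\pi}\int \frac{y\,u_0(t)}{(x-t)^2+y^2}\,\dd t$ gives exactly the $\arctan$ expression in the statement. The function $h$ is harmonic in $\{y>0\}$, continuous up to the boundary except at the two jump points, and bounded. Consider $G = F - h$ on the open half-plane. Then $G$ is subharmonic there (difference of subharmonic and harmonic). I would then invoke the Phragmén–Lindelöf principle for the half-plane: $G$ is subharmonic, bounded above (since both $F\le 0$ and $h$ is bounded), and $\limsup_{(x,y)\to(x_0,0)} G(x,y) \le 0$ for every boundary point $x_0$ — this uses $F(x,0)\le -b$ on the strip to cancel against $h\to -b$ there, $F\le 0$ off the strip against $h\to 0$, and at the two jump points boundedness of $h$ plus $F\le 0$ handles the exceptional set (a set of two points is polar/removable for a bounded-above subharmonic function). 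The conclusion $G\le 0$, i.e. $F\le h$, follows.

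The main obstacle is making the boundary-behavior step rigorous at the two discontinuity points of $h$ and justifying the Phragmén–Lindelöf step on the unbounded domain: one must rule out growth of $G$ at infinity in the half-plane. Since $F$ itself is only assumed subharmonic in a neighborhood of the closed half-plane with the single a priori bound $F\le 0$ on $\{y\ge 0\}$, I would use that $G = F - h \le -h \le \text{const}$ (as $h$ is bounded), so $G$ is bounded above on all of $\{y>0\}$; then the standard barrier argument — comparing with $\varepsilon \log\frac{1}{|z|}$-type or with $\varepsilon\, \mathrm{Im}\,\sqrt{z}$ correctors near the boundary, and using the maximum principle on large half-disks where the boundary contribution of $G$ is $\le 0$ and the circular-arc contribution is controlled by the uniform upper bound — yields $G\le 0$. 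The two jump points of $h$ require a local argument: near each, $h$ stays bounded, $F\le 0$, so $G$ is bounded above near those points and, being subharmonic and bounded above on a punctured neighborhood, extends subharmonically across the point, after which the global maximum principle applies without exception. This is the delicate part; everything else is the explicit Poisson-kernel computation and routine harmonic-function facts.
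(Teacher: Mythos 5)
Your proposal is correct and follows essentially the same route as the paper: recognize the right-hand side as the Poisson integral over the half-plane of $-b$ times the indicator of an interval, and compare $F$ with it by the maximum principle on large half-disks; the paper handles the unbounded domain by adding $\varepsilon$ and using that the Poisson integral decays at infinity, and the two jump points need no separate removability argument since $0\le u\le1$ globally while $F\le -b$ at the endpoints, so your polar-set detour is dispensable. Your observation about the asymmetry is well taken: the stated hypothesis $F(x,0)\le -b$ on $[0,\delta]$ does not support the stated conclusion (for continuous $F$ the conclusion forces $F(x,0)\le -b$ on all of $(-\delta,\delta)$ by letting $y\to 0^+$), and the paper's own proof silently uses $F(x,0)\le -bu(x,0)$ on the whole symmetric interval; since the estimate actually invoked in the application holds for $|\sigma|\le\delta_0$, this is a typo in the statement rather than a gap in the argument, and your proof should simply be written under the symmetric hypothesis.
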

\begin{proof}
Consider the Poisson kernel for $\{ (x, y) \in \R^2 : y > 0 \}$
\[P_y(x) = \frac{1}{\pi} \frac{y}{x^2 + y^2}, \qquad x \in \R,\, y \geq 0.\]
Then,
\[u (x, y) = \frac{1}{\pi} \int_{-\delta}^\delta \frac{y}{(x - z)^2 + y^2} \, \dd z\]
is harmonic in $\{ (x, y) \in \R^2 : y > 0 \}$ and $u(x, 0) = 1$ for all $|x| \leq \delta$ and $u(x, 0) = 0$ for $|x| > \delta$. Thus,
\[F(x, 0) \leq -b u(x, 0), \qquad \forall x \in \R.\]
Moreover, for every $\varepsilon > 0$ there exists $R > 0$ such that
\[ -b u(x, y) + \varepsilon \geq 0, \qquad |x| + |y| = R,\]
since $u(x, y) \longrightarrow 0$ as $|x| + |y| \longrightarrow \infty$. Therefore,
\[F(x, y) \leq -b u(x,y) + \varepsilon\]
on $\{ (x, 0) : x \in \R \} \cup \{ (x, y) : |x| + |y| = R,\, y\geq 0 \}$. By the properties of subharmonic functions
\[F(x, y) \leq -b u(x,y) + \varepsilon\]
in $\{ (x, y) : |x| + |y| \leq R,\, y\geq 0 \}$. Making $\varepsilon$ vanish and computing $u(x, y)$ explicitly, we deduce the statement of the lemma.
\end{proof}
Whenever $\varepsilon^{1/2} + |\log \varepsilon|^{-1} < 1$, Lemma \ref{lem:subharmonic} can be applied and yields
\[| \langle \hat{q}(\sigma + i, \centerdot), f \rangle | \lesssim \| f \|_{H^3(M_1)} e^{\tilde{k}\log(\varepsilon^{1/2} + |\log \varepsilon|^{-1})/\tilde{R}^2}\qquad \forall |\sigma| \leq \tilde{R}
\]
with $\tilde{R} \geq 1$, since $\arctan (x+ \delta_0) - \arctan(x-\delta_0) \sim 2\delta_0/x^2$ for $x \geq 1$. This provides a control of the frequencies $\sigma + i$ with $|\sigma| \leq \tilde{R}$:
\[\| \hat{q}(\sigma + i, \centerdot) \| \lesssim e^{\tilde{k}\log(\varepsilon^{1/2} + |\log \varepsilon|^{-1})/\tilde{R}^2}.\]
The control of these frequencies and the fact that $q \in H^\lambda (\R; H^{-3}(M_1))$ will be enough to bound a mixed norm for $q$. The choice of $\lambda < 1/2$ guarantees that the extension by zero preserve the regularity and $\norm{q}{}{H^\lambda(\R;H^{-3}(M_1))}$ is bounded by a constant depending on the a priori bound $K$, $n$ and $M$. Indeed,
\begin{align*}
\| q \|^2_{L^2(\R; H^{-3}(M_1))} \leq& e^{2S} \int_{\R} \| \hat{q}(\sigma + i, \centerdot) \|^2_{H^{-3}(M_1)} \dd \sigma \\
\lesssim& \tilde{R} e^{\tilde{k}\log(\varepsilon^{1/2} + |\log \varepsilon|^{-1})/\tilde{R}^2}\\
&+ \tilde{R}^{-2\lambda} \int_{|\sigma| > \tilde{R}} (1 + |\sigma|^2)^\lambda \| \tilde{q}(\sigma + i, \centerdot) \|^2_{H^{-3}(M_1)} \,\dd \sigma.
\end{align*}

Finally, choosing
\[\frac{\tilde{k}}{\tilde{R}^2} = |\log(\varepsilon^{1/2} + |\log \varepsilon|^{-1})|^{-1/2}\]
we get
\begin{equation}
\norm{q}{}{L^2(\R; H^{-3} (M_1))} \lesssim \bigg| \log (\varepsilon^{1/2} + |\log \varepsilon|^{-1}) \bigg|^{-\lambda/4} \label{es:stabilityq}
\end{equation}
whenever $\varepsilon$ is small enough. The implicit constant in the last estimate depends also on $\lambda$. This ends the proof of Theorem \ref{th:sch}. However, we will need an extra argument to prove Theorem \ref{th:cal}, which is as follows. Note that in this case
\[c_1^\frac{n- 2}{4} c_2^\frac{n- 2}{4} q = |g'|^{-1/2} \partial_{x_j} \left( c_1^\frac{n- 2}{4} c_2^\frac{n- 2}{4} g'^{j k} |g'| \partial_{x_k} (\log c_1^\frac{n- 2}{4} - \log c_2^\frac{n- 2}{4}) \right).\]
Here we are using Einstein's summation convention. Observe that $\log c_1 - \log c_2$ satisfies an elliptic equation so, by its well-posedness, we have
\[ \norm{\log c_1 - \log c_2}{}{H^1(M)} \lesssim \norm{q}{}{H^{-1} (M)} + \norm{\log c_1 - \log c_2}{}{H^{1/2}(\partial M)}. \]
By a simple interpolation argument, the a priori bounds for $c_1$ and $c_2$ and estimate \eqref{es:stabilityq} we get that
\begin{align*}
\norm{q}{}{H^{-1} (M)} &\leq \norm{q}{}{L^2(\R;H^{-1}(M_1))} \leq \norm{q}{1/3}{L^2(\R;H^{-3}(M_1))} \norm{q}{2/3}{L^2(M)}\\
& \lesssim \bigg| \log (\varepsilon^{1/2} + |\log \varepsilon|^{-1}) \bigg|^{-\lambda/12}.
\end{align*}
Therefore, using \eqref{es:stabilityBOUND} we get
\[ \norm{\log c_1 - \log c_2}{}{H^1(M)} \lesssim \bigg| \log (\varepsilon^{1/2} + |\log \varepsilon|^{-1}) \bigg|^{-\lambda/12} + \varepsilon. \]
Finally, by interpolation and Morrey's embedding (in the spirit of \cite{CaGR}) we conclude the proof of Theorem \ref{th:cal}.

\begin{acknowledgements} \rm P.C. is supported by the projects ERC-2010 Advanced Grant, 267700 - InvProb and Academy of Finland (Decision number 250215, the Centre of Excellence in Inverse Problems). P.C. also belongs to the project MTM 2011-02568 Ministerio de Ciencia y Tecnolog\'ia de Espa\~na. M.S. is partly supported by the Academy of Finland and an ERC Starting Grant. The authors would like to thank the organizers of the program on Inverse Problems held in the Institut Mittag-Leffler in 2013 where part of this work was carried out. They would also like to thank David Dos Santos Ferreira for sharing his elegant argument to enlarge the set of controlled frequencies.
\end{acknowledgements}

\end{document}